\newcommand*\quot[2]{{^{\textstyle #1}\big/_{\textstyle #2}}}
\newcommand\blfootnote[1]{%
  \begingroup
  \renewcommand\thefootnote{}\footnote{#1}%
  \addtocounter{footnote}{-1}%
  \endgroup
}
\newtheorem{thm}{Theorem}[section]
\newtheorem{defn}{Definition}[section]
\newtheorem{lem}[thm]{Lemma}
\newtheorem{prop}[thm]{Proposition}
\newtheorem{cor}[thm]{Corollary}
\newtheorem{rem}{Remark}
\title{The fractional Hopf differential and a weak formulation of stationarity for the half Dirichlet energy}
\author{Filippo Gaia}
\date{\today}
\begin{document}

\allowdisplaybreaks

\maketitle
\begin{abstract}
We obtain a weak formulation of the stationarity condition for the half Dirichlet energy, which can be expressed in terms of a fractional analogous to the Hopf differential. As an application we show that conformal harmonic maps from the disc are precisely the harmonic extensions of stationary points of the half Dirichlet energy on the circle. We also derive a Noether theorem and a Pohozaev identity for stationary points of the half Dirichlet energy.
\end{abstract}

\blfootnote{\textup{2020} \textit{Mathematics Subject Classification}: \textup{58E20, 35R11, 35J20}}

\section{Introduction}
In the study of harmonic maps from a domain $\Omega\subset\mathbb{R}^n$ to an embedded manifold $N\subset\mathbb{R}^k$, the stationarity condition
\begin{align}\label{eq: stationarity condition}
    \frac{d}{dt}\bigg\vert_{t=0}\int_\Omega\lvert \nabla u (x+tX(x))\rvert^2=0
\end{align}
(for any compactly supported smooth vector field $X$ on $\Omega$) plays a crucial role. In dimension 3 it is a necessary condition for any partial regularity result (see \cite{Riviere-Acta}), while when $n=2$ (which is critical for the harmonic maps problem), condition \eqref{eq: stationarity condition} is satisfied by all weakly harmonic maps as a consequence of their regularity (see \cite{Helein-regularity}).
For any map $u$ in the natural space $W^{1,2}(\Omega,N)$, condition \eqref{eq: stationarity condition} is equivalent to the following Euler-Lagrange equation:
\begin{align}\label{eq: EL for stationarity}
    \sum_{i=1}^n \partial_{x_i}\left(\lvert\nabla u\rvert^2\delta_{ij}-2\partial_{x_i}u\cdot\partial_{x_j}u\right)\quad\forall\, j=1,...,n.
\end{align}
Notice that for a general map $u\in W^{1,2}(\Omega, N)$, \eqref{eq: EL for stationarity} only makes sense in a distributional sense. In the special case $n=2$, the system of equations \eqref{eq: EL for stationarity} can be rewritten in complex coordinates as
\begin{align}\label{eq: HD is holomorphic}
    \partial_{\overline{z}}\mathscr{H}(u)=0,
\end{align}
where $\mathscr{H}(u)$ is the Hopf differential of $u$, defined as
\begin{equation}\label{eq: Hopf-Diff}
\mathscr{H}(u)=\frac{\partial u}{\partial z}\cdot\frac{\partial u}{\partial z}.
\end{equation}
Recall that $\mathscr{H}(u)=0$ if and only if $u$ is weakly conformal. As conformal harmonic maps are parametrizations of minimal surfaces (in $N$), condition \eqref{eq: HD is holomorphic} is of particular interest in geometric analysis. Consider for instance the following problem: let $N=\mathbb{R}^k$, $\Omega=D^2$ and let $M\subset\mathbb{R}^k$ be a submanifold. 
Let $u_0\in H^\frac{1}{2}(\partial D^2)$,
\begin{enumerate}
    \item when is its harmonic extension $u$ (i.e. the only harmonic map from $D^2$ to $\mathbb{R}^k$ having $u_0$ as a trace) conformal?
    \item when does it define a free boundary minimal surface (with boundary in $M$)?
\end{enumerate}
For the second question, recall that $u$ satisfies the free boundary condition on $\partial D^2$ if and only if $P_M^T (-\Delta)^\frac{1}{2}u_0=0$ (where $P_M^T$ denotes the orthogonal projection on the tangent space of $M$), i.e. $u_0$ is half harmonic. In this case, $u_0$ is smooth (see \cite{3-terms-commutators}) and the holomorphicity of the Hopf differential of $u$ \eqref{eq: HD is holomorphic} implies that $u$ is conformal (see Theorem 1.2 in \cite{DALIO-compactness}, for a proof see Lemma 4.27 in \cite{Millot-Sire} or Theorem 2.9 in \cite{DaLio-Martinazzi-Riviere}; see also Theorem 1.6 in \cite{DaLio-Pigati}). Regarding the first question, we will see later that with the help of condition \eqref{eq: HD is holomorphic} we can characterize the maps $u_0\in H^\frac{1}{2}(\partial D^2, M)$ whose harmonic extension is conformal (see Theorem \ref{thm: Thm conformal extension, intro}).\\
The purpose of the present work is to study the stationarity condition in the analogous\footnote{Notice that dimension 1 is critical for the half harmonic maps problem (see Proposition 1 in \cite{Sub-criticality}). Moreover, while the Dirichlet energy is invariant under conformal transformations in dimension 2, the half Dirichlet energy on $\mathbb{S}^1$ is invariant under the traces of conformal transformations (see Lemma \ref{lem: conformal-invariance}, the corresponding invariance property for the half Dirichlet energy on $\mathbb{R}$ was already observed in \cite{3-terms-commutators}).} framework of half harmonic maps on $\mathbb{S}^1$ and present some applications to local and non-local problems.
First we obtain a Euler-Lagrange equation for stationary points of the half Dirichlet energy\footnote{For any $s\in (0,1)$, $(-\Delta)^s u$ denotes the $s$-fractional Laplacian of $u$, defined through the following identity for the Fourier coefficients:
\begin{align}
    \widehat{(-\Delta)^s u}(n)=\lvert n\rvert^{2s}\widehat{u}(n)\qquad \forall \, n\in \mathbb{Z}.
\end{align}}
\begin{align}\label{eq: def-s-frac-energy}
    E_\frac{1}{2}(u)=\int_{\mathbb{S}^1}\left\lvert (-\Delta)^\frac{1}{4} u\right\rvert^2.
\end{align}
A direct computation shows that when $u$ is sufficiently regular, $u:\partial D^2\to \mathbb{R}^k$ is a stationary point of $E_\frac{1}{2}$ if and only if
\begin{align}\label{eq: weak EL}
    (-\Delta)^\frac{1}{2}u\cdot u'=0.
\end{align}
This expression however might not be well defined for some $u$ in the natural space $H^\frac{1}{2}(\partial D^2)$. We show in Proposition \ref{prop: first-variation-formula} that \eqref{eq: weak EL} can be given a distributional meaning for any $u\in H^\frac{1}{2}(\partial D^2)$, providing a weak formulation of the stationarity condition for the half Dirichlet energy, in analogy to \eqref{eq: EL for stationarity}.
We will then see that stationary points of the half Dirichlet energy can be characterized in terms of the trace of the Hopf differential of their harmonic extensions. More precisely, notice first that even if we assume $v\in H^1\cap C^\infty(D^2)$, the trace of $\mathscr{H}(v)$ on $\partial D^2$ might not be well defined. Consider the following operator, extending the standard trace operator for functions smooth up to the boundary: for any $u\in H^\frac{1}{2}(\mathbb{S}^1)$ let $\tilde{u}$ denote its harmonic extension in $D^2$. For any $\varphi\in C^\infty(\mathbb{S}^1)$ set
\begin{align}
    \mathscr{H}_\frac{1}{2}(u)[\varphi]=\int_{D^2}\left(\frac{1}{z}\mathscr{H}(\tilde{u})\frac{\partial\tilde{\varphi}}{\partial \overline{z}}\right) dx^2,
\end{align}
where $\tilde{\varphi}$ is the harmonic extension of $\varphi$ in $D^2$.
We will call the distribution $\mathscr{H}_\frac{1}{2}(u)$ the \textit{fractional Hopf differential of $u$ }and we will see that it is related to the first inner variation of $E_\frac{1}{2}$ by the following formula (see Proposition \ref{prop: frac-Hopf-diff}):
\begin{align}\label{eq: frac-Hopf-diff-variation-statement}
    \mathscr{H}_\frac{1}{2}(u)=\frac{e^{-i2\theta}}{2i}\left(\mathcal{V}_\frac{1}{2}(u)+iH\left(\mathcal{V}_\frac{1}{2}(u)\right)\right),
\end{align}
where $H$ denotes the Hilbert's transform.
In particular we have
\begin{lem}\label{lem: characterization FHD}
    $u\in H^\frac{1}{2}(\partial D^2)$ is a stationary point of the half Dirichlet energy if and only if $\mathscr{H}_\frac{1}{2}(u)=0$.
\end{lem}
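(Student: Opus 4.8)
The plan is to deduce the equivalence directly from identity \eqref{eq: frac-Hopf-diff-variation-statement} of Proposition \ref{prop: frac-Hopf-diff}, together with the characterization of stationarity in terms of the first inner variation $\mathcal{V}_\frac{1}{2}(u)$ furnished by Proposition \ref{prop: first-variation-formula}. By that proposition, $u$ is a stationary point of $E_\frac{1}{2}$ precisely when $\mathcal{V}_\frac{1}{2}(u)=0$ as a distribution on $\mathbb{S}^1$, so the lemma reduces to the distributional equivalence $\mathcal{V}_\frac{1}{2}(u)=0\iff\mathscr{H}_\frac{1}{2}(u)=0$. The forward implication is immediate from \eqref{eq: frac-Hopf-diff-variation-statement}: if $\mathcal{V}_\frac{1}{2}(u)=0$, then $H(\mathcal{V}_\frac{1}{2}(u))=0$ as well, so the entire right-hand side vanishes and $\mathscr{H}_\frac{1}{2}(u)=0$.

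For the converse I would argue as follows. The scalar prefactor $\frac{e^{-i2\theta}}{2i}$ appearing in \eqref{eq: frac-Hopf-diff-variation-statement} is smooth and nowhere vanishing on $\mathbb{S}^1$, so multiplication by it is an invertible operation on distributions; hence $\mathscr{H}_\frac{1}{2}(u)=0$ is equivalent to $\mathcal{V}_\frac{1}{2}(u)+iH(\mathcal{V}_\frac{1}{2}(u))=0$. The crucial structural fact is that $\mathcal{V}_\frac{1}{2}(u)$ is a \emph{real-valued} distribution, being the first inner variation of the real functional $E_\frac{1}{2}$ along real vector fields on the circle, and that the Hilbert transform $H$ sends real-valued distributions to real-valued distributions. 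Decomposing the complex identity into its real and imaginary parts, the real part reads exactly $\mathcal{V}_\frac{1}{2}(u)=0$, which is the sought stationarity.

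Making this reality step precise at the distributional level is the only delicate point, and this is where I expect the (mild) main obstacle to lie. I would resolve it by pairing with real test functions $\varphi\in C^\infty(\mathbb{S}^1)$: since both $\mathcal{V}_\frac{1}{2}(u)[\varphi]$ and $H(\mathcal{V}_\frac{1}{2}(u))[\varphi]$ are real for every such $\varphi$, taking the real part of $\bigl(\mathcal{V}_\frac{1}{2}(u)+iH(\mathcal{V}_\frac{1}{2}(u))\bigr)[\varphi]=0$ yields $\mathcal{V}_\frac{1}{2}(u)[\varphi]=0$ for all real $\varphi$, whence $\mathcal{V}_\frac{1}{2}(u)=0$ and, by Proposition \ref{prop: first-variation-formula}, $u$ is stationary.
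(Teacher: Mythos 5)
Your proof is correct and is essentially the paper's own argument: the paper likewise reduces the lemma to the identity \eqref{eq: frac-Hopf-diff-variation-statement} of Proposition \ref{prop: frac-Hopf-diff} combined with Proposition \ref{prop: first-variation-formula}, and settles the converse implication using the reality of $\mathcal{V}_\frac{1}{2}(u)$. The only cosmetic difference is the final step: the paper notes that $\mathcal{V}_\frac{1}{2}(u)+iH\left(\mathcal{V}_\frac{1}{2}(u)\right)=0$ forces $\mathcal{V}_\frac{1}{2}(u)$ to consist only of negative frequencies and then invokes reality, whereas you take real and imaginary parts directly using that $H$ preserves real-valuedness --- two equivalent one-line finishes resting on the same fact.
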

This will allow us to show that harmonic extensions of stationary points of $E_\frac{1}{2}$ are exactly the harmonic conformal maps from $D^2$:
\begin{thm}\label{thm: Thm conformal extension, intro}
    Let $u\in H^\frac{1}{2}(\mathbb{S}^1)$ and let $\tilde{u}\in H^1(D^2)$ denote its harmonic extension. Then the following are equivalent.
    \begin{enumerate}
        \item $u$ is a stationary point of $E_\frac{1}{2}$, i.e. $\mathscr{H}_\frac{1}{2}(u)=0$,
        \item $\tilde{u}$ is weakly conformal, i.e. $\mathscr{H}(\tilde{u})=0$.
    \end{enumerate}
\end{thm}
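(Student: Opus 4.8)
The plan is to build everything on the single structural observation that since $\tilde{u}$ is (componentwise) harmonic, $\partial_z\tilde{u}$ is holomorphic, and hence the Hopf differential $\mathscr{H}(\tilde{u})=\partial_z\tilde{u}\cdot\partial_z\tilde{u}$ is a \emph{holomorphic} function on $D^2$. Because $\tilde{u}\in H^1(D^2)$, we have $\partial_z\tilde{u}\in L^2(D^2)$, so $\partial_z\tilde{u}$ lies in the Hardy space $\mathcal{H}^2(D^2)$ and $\mathscr{H}(\tilde{u})\in\mathcal{H}^1(D^2)$; in particular it admits an $L^1$ boundary trace on $\partial D^2$ and is recovered from that trace by the Cauchy/Poisson formula. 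With this in hand the theorem is, at heart, the assertion that a holomorphic function on the disc is detected by its boundary behaviour. The implication (2)$\Rightarrow$(1) is then immediate: if $\mathscr{H}(\tilde{u})\equiv 0$ the integrand defining $\mathscr{H}_\frac{1}{2}(u)[\varphi]$ vanishes for every test function $\varphi$, so $\mathscr{H}_\frac{1}{2}(u)=0$ and $u$ is stationary by Lemma~\ref{lem: characterization FHD}.

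For the converse (1)$\Rightarrow$(2) I would route the argument through the variational identity \eqref{eq: frac-Hopf-diff-variation-statement}. By Lemma~\ref{lem: characterization FHD} stationarity is equivalent to $\mathscr{H}_\frac{1}{2}(u)=0$, and since $\mathcal{V}_\frac{1}{2}(u)$ is real while $H$ completes a real function to a boundary value of a holomorphic function, the right-hand side of \eqref{eq: frac-Hopf-diff-variation-statement} vanishes if and only if $\mathcal{V}_\frac{1}{2}(u)=0$, i.e. (weakly) $(-\Delta)^\frac{1}{2}u\cdot u'=0$ as in \eqref{eq: weak EL}. The next step is the boundary computation of the Hopf differential: using $\partial_z=\tfrac{e^{-i\theta}}{2}(\partial_r-\tfrac{i}{r}\partial_\theta)$ together with $\partial_r\tilde{u}|_{\partial D^2}=(-\Delta)^\frac{1}{2}u$ and $\partial_\theta\tilde{u}|_{\partial D^2}=u'$, one finds that $e^{2i\theta}\mathscr{H}(\tilde{u})|_{\partial D^2}$ has imaginary part a nonzero constant multiple of $(-\Delta)^\frac{1}{2}u\cdot u'$. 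Hence stationarity is equivalent to the statement that $e^{2i\theta}\mathscr{H}(\tilde{u})|_{\partial D^2}$ is real-valued on $\partial D^2$.

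Now the holomorphicity is used decisively. Because $\mathscr{H}(\tilde{u})$ is holomorphic, its trace has Fourier spectrum contained in $\{n\ge 0\}$, so the twisted trace $e^{2i\theta}\mathscr{H}(\tilde{u})|_{\partial D^2}$ has spectrum contained in $\{n\ge 2\}$. A real-valued function whose spectrum avoids all negative frequencies (here even all $n\le 1$) must be identically zero, since reality would force each positive mode to be the conjugate of an absent negative mode. Thus the entire trace of $\mathscr{H}(\tilde{u})$ vanishes — note that this reality argument is exactly what pins down the constant/holomorphic mode $\mathscr{H}(\tilde{u})(0)=\partial_z\tilde{u}(0)\cdot\partial_z\tilde{u}(0)$, which pairing the defining integral against the antiholomorphic functions $\partial_{\overline{z}}\tilde{\varphi}$ does not see directly. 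Finally, a function in $\mathcal{H}^1(D^2)$ whose nontangential boundary values vanish is identically zero, so $\mathscr{H}(\tilde{u})\equiv 0$ on $D^2$, which is (2).

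I expect the main obstacle to be purely analytic: carrying out each of these steps at the natural low regularity $u\in H^\frac{1}{2}(\mathbb{S}^1)$. Concretely, one must justify the boundary identity $\mathcal{V}_\frac{1}{2}(u)=-2\,\mathrm{Im}\big(e^{2i\theta}\mathscr{H}(\tilde{u})|_{\partial D^2}\big)$ in a distributional sense when $\partial_z\tilde{u}$ is merely $L^2$ and $\mathscr{H}(\tilde{u})$ is merely an $\mathcal{H}^1$-Hardy function whose trace exists only in $L^1$; and one must invoke the correct uniqueness principle (a Hardy-space function with vanishing nontangential trace on a set of positive measure is zero) rather than the classical continuous-up-to-the-boundary statement. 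The identification of $\mathscr{H}_\frac{1}{2}(u)$ with the boundary trace of the holomorphic function $\mathscr{H}(\tilde{u})$, which is effectively the content of Proposition~\ref{prop: frac-Hopf-diff}, is the bridge that makes the reality-plus-spectrum argument applicable, and verifying it rigorously is where the real work lies.
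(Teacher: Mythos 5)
Your implication (2)$\Rightarrow$(1) is fine and identical to the paper's. The problem is the foundation of (1)$\Rightarrow$(2): the claim that $\tilde{u}\in H^1(D^2)$ gives $\partial_z\tilde{u}\in L^2(D^2)$ and \emph{hence} $\partial_z\tilde{u}\in\mathcal{H}^2(D^2)$ confuses the Bergman space with the Hardy space. Area-integrability of $\lvert\partial_z\tilde{u}\rvert^2$ puts $\partial_z\tilde{u}$ in the Bergman space $A^2$, which only amounts to $\sum_{n\geq 1} n\lvert\widehat{u}(n)\rvert^2<\infty$, i.e. $u\in H^\frac{1}{2}(\mathbb{S}^1)$; membership in the Hardy space $\mathcal{H}^2$ would require $\sum_{n\geq 1} n^2\lvert\widehat{u}(n)\rvert^2<\infty$, i.e. $u\in H^1(\mathbb{S}^1)$, which is strictly stronger. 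Concretely, $\widehat{u}(n)=n^{-1}(\log n)^{-1}$ for $n\geq 2$ gives $u\in H^\frac{1}{2}$ with $\partial_z\tilde{u}\notin\mathcal{H}^2$. Likewise the product of two Bergman-$A^2$ functions lies only in Bergman-$A^1$, not in $\mathcal{H}^1$ (e.g. $(1-z)^{-3/2}\in A^1\setminus\mathcal{H}^1$), so $\mathscr{H}(\tilde{u})$ need not admit any $L^1$ nontangential boundary trace. Both pillars of your argument --- the boundary identity $\mathcal{V}_\frac{1}{2}(u)=-2\,\mathrm{Im}\bigl(e^{2i\theta}\mathscr{H}(\tilde{u})\vert_{\partial D^2}\bigr)$ and the Hardy uniqueness theorem --- are therefore unavailable at the stated regularity. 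You flag this as a technical verification ``where the real work lies,'' but it is not a verification problem: the Hardy-space framework you invoke is genuinely false for $H^\frac{1}{2}$ data, so the route cannot be completed as described.

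The paper anticipates exactly this obstruction (``$\mathscr{H}_\frac{1}{2}(u)$ coincides with the trace of $\mathscr{H}(\tilde{u})$ only if $u$ is sufficiently smooth, so we need to be more careful'') and avoids ever taking a boundary trace of $\mathscr{H}(\tilde{u})$. It restricts $\tilde{u}$ to interior circles, setting $u_r(\theta):=\tilde{u}(r,\theta)$, which is smooth for $r<1$; since $\widehat{u_r}(n)=r^{\lvert n\rvert}\widehat{u}(n)$, the Fourier characterization gives $\mathscr{F}\bigl(e^{-i2\theta}\mathscr{H}_\frac{1}{2}(u_r)\bigr)(k)=r^{k}\mathscr{F}\bigl(e^{-i2\theta}\mathscr{H}_\frac{1}{2}(u)\bigr)(k)$, so stationarity of $u$ forces $\mathscr{H}_\frac{1}{2}(u_r)=0$; the smooth-case trace argument (your reality-plus-positive-spectrum argument, which is sound in that setting and is exactly Proposition \ref{prop: frac-Hopf-diff}) yields $\mathscr{H}(\widetilde{u_r})\equiv 0$; and the scaling $\mathscr{H}(\widetilde{u_r})(x)=r^2\mathscr{H}(\tilde{u})(rx)$ lets $r\to 1$. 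Equivalently, and most directly: since $\partial_z\tilde{u}=\sum_{n\geq 1}n\widehat{u}(n)z^{n-1}$, one has $\mathscr{H}(\tilde{u})(z)=\sum_{k\geq 2}z^{k-2}\sum_{m+n=k}mn\,\widehat{u}(m)\cdot\widehat{u}(n)$, so the Fourier-coefficient characterization of stationarity is literally the vanishing of all Taylor coefficients of the interior holomorphic function, and no boundary trace is needed at all. If you replace your Hardy-trace step by this power-series (or dilation) observation, the rest of your outline goes through.
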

From Lemma \eqref{lem: characterization FHD} we also deduce a characterization of stationary points of the half Dirichlet energy in terms of Fourier coefficients (which has already been known from \cite{dalio2016pohozaevtype}, \cite{DaLio-Pohozaev}, see also \cite{Cabre-Mas}):
\begin{lem}
    Let $u\in H^\frac{1}{2}(\mathbb{S}^1)$. Then $u$ is a stationary point of $E_\frac{1}{2}$ if and only if for any $k\in \mathbb{N}$ there holds
    \begin{align}\label{eq: proof-characterization-stationarity-Fourier-intro}
        \sum_{\substack{m,n\in\mathbb{N}\\ m+n=k}}m n\,\widehat{u}(m)\cdot \widehat{u}(n)=0.
    \end{align}
    In particular for any stationary point $u\in H^\frac{1}{2}(\mathbb{S}^1)$ of $E_\frac{1}{2}$ there holds
    \begin{align}\label{eq: balancing condition}
        \left\lvert\int_0^{2\pi}u(\theta)\cos(\theta)d\theta\right\rvert^2=\left\lvert\int_0^{2\pi}u(\theta)\sin(\theta)d\theta\right\rvert^2
    \end{align}
\end{lem}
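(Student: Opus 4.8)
The plan is to reduce the statement to the vanishing of the first inner variation $\mathcal V_\frac12(u)=(-\Delta)^\frac12 u\cdot u'$ and then simply expand this distribution in Fourier series. By Lemma~\ref{lem: characterization FHD} stationarity is equivalent to $\mathscr H_\frac12(u)=0$, and by the identity \eqref{eq: frac-Hopf-diff-variation-statement} we have $\mathscr H_\frac12(u)=\frac{e^{-i2\theta}}{2i}\big(\mathcal V_\frac12(u)+iH(\mathcal V_\frac12(u))\big)$. Since the prefactor $e^{-i2\theta}/(2i)$ never vanishes and $\mathcal V_\frac12(u)$ is a real distribution (so that $H(\mathcal V_\frac12(u))$ is real as well), the equation $\mathscr H_\frac12(u)=0$ forces both the real and imaginary parts to vanish. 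Hence, using Proposition~\ref{prop: first-variation-formula}, stationarity is equivalent to $\mathcal V_\frac12(u)=0$, and it remains to compute the Fourier coefficients of \eqref{eq: weak EL}.

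First I would write $(-\Delta)^\frac12 u=\sum_{n\in\mathbb Z}|n|\,\widehat u(n)e^{in\theta}$ and $u'=\sum_{m\in\mathbb Z}im\,\widehat u(m)e^{im\theta}$, so that the $k$-th Fourier coefficient of the product $\mathcal V_\frac12(u)$ equals $c_k=i\sum_{m+n=k}|m|\,n\,\widehat u(m)\cdot\widehat u(n)$. Symmetrising in $m,n$ and using that the dot product is symmetric gives $c_k=\tfrac{i}{2}\sum_{m+n=k}\big(|m|n+|n|m\big)\widehat u(m)\cdot\widehat u(n)$. The key observation is that the weight $|m|n+|n|m$ vanishes whenever $m$ and $n$ have opposite signs (or one of them is zero) and equals $2mn$ when $m,n>0$; for $k\ge 1$ there are moreover no contributions with $m,n<0$. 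Consequently all the cross terms cancel and $c_k=i\sum_{\substack{m+n=k\\ m,n\ge 1}}mn\,\widehat u(m)\cdot\widehat u(n)$ for every $k\ge 1$, while $c_0=0$ and $c_{-k}=\overline{c_k}$ by reality. Thus $\mathcal V_\frac12(u)=0$ is equivalent to the vanishing of all these coefficients, which is precisely \eqref{eq: proof-characterization-stationarity-Fourier-intro}.

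For the balancing condition I would specialise the identity to $k=2$, which reads $\widehat u(1)\cdot\widehat u(1)=0$. Writing $\widehat u(1)=\frac{1}{2\pi}(P-iQ)$ with $P=\int_0^{2\pi}u(\theta)\cos\theta\,d\theta$ and $Q=\int_0^{2\pi}u(\theta)\sin\theta\,d\theta$ (both real vectors, since $u$ is $\mathbb R^k$-valued), one gets $4\pi^2\,\widehat u(1)\cdot\widehat u(1)=(|P|^2-|Q|^2)-2i\,P\cdot Q$. Taking the real part yields $|P|^2=|Q|^2$, which is exactly \eqref{eq: balancing condition} (the imaginary part gives the additional orthogonality $P\cdot Q=0$, which is not recorded in the statement).

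The only genuinely delicate point is the cancellation of the opposite-sign frequencies in the second step: this is the algebraic manifestation of the fact that only the holomorphic part of the harmonic extension $\tilde u$ enters the Hopf differential $\mathscr H(\tilde u)$, so that the bilateral sum over $\mathbb Z^2$ collapses to a one-sided sum over positive frequencies. A secondary technical issue is that for $u\in H^\frac12$ the product $(-\Delta)^\frac12 u\cdot u'$ is only a distribution, so the coefficient computation must be read in the sense of Proposition~\ref{prop: first-variation-formula}; testing $\mathcal V_\frac12(u)$ against the trigonometric polynomials $e^{ik\theta}$ extracts the coefficients $c_k$ rigorously and justifies the coefficientwise conclusion.
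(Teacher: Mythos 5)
Your proof is correct, and it follows the same basic strategy as the paper --- reduce stationarity to the vanishing of a quadratic expression and compute its Fourier coefficients --- but you organize the key cancellation differently. The paper works through the fractional Hopf differential: it splits $u=u_++u_-$ into positive and non-positive frequency parts, writes $u'=i(-\Delta)^{\frac{1}{2}}(u_+-u_-)$, observes that the cross terms cancel by symmetry of the non-hermitian dot product, so that $\mathcal{V}_{\frac{1}{2}}(u)+iH\left(\mathcal{V}_{\frac{1}{2}}(u)\right)=2i(-\Delta)^{\frac{1}{2}}u_+\cdot(-\Delta)^{\frac{1}{2}}u_+$, and hence by Proposition \ref{prop: frac-Hopf-diff} the distribution $\mathscr{H}_{\frac{1}{2}}(u)$ is, up to the factor $e^{-i2\theta}$, a manifestly one-sided convolution whose coefficients are exactly the sums in \eqref{eq: proof-characterization-stationarity-Fourier-intro}. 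You instead compute the coefficients of $\mathcal{V}_{\frac{1}{2}}(u)$ directly, symmetrize the weight to $\frac{1}{2}\left(|m|n+|n|m\right)$, and note that it vanishes on mixed-sign pairs while reality of $\mathcal{V}_{\frac{1}{2}}(u)$ gives the conjugate symmetry $c_{-k}=\overline{c_k}$; this is the same cancellation in different clothing, but it makes your argument slightly more economical, since it needs only Proposition \ref{prop: first-variation-formula} --- indeed your opening detour through Lemma \ref{lem: characterization FHD} and the Hilbert transform is superfluous, as that proposition already states that stationarity is equivalent to $\mathcal{V}_{\frac{1}{2}}(u)=0$. What the paper's route buys in exchange is the explicit one-sided formula for $\mathscr{H}_{\frac{1}{2}}(u)$ itself, which is what gets reused in the proof of Theorem \ref{thm: Thm conformal extension, intro}; from your computation one would recover it only by combining with Proposition \ref{prop: frac-Hopf-diff} again. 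Your handling of the low-regularity issue is also sound and matches the paper's approximation step: $\mathcal{V}_{\frac{1}{2}}$ is continuous into $\mathscr{D}'(\mathbb{S}^1)$, Fourier coefficients of distributions are obtained by pairing with the smooth functions $e^{-ik\theta}$, and for fixed $k$ the sum in \eqref{eq: proof-characterization-stationarity-Fourier-intro} has finitely many terms, so the coefficient identity proved for smooth maps passes to the limit along any approximating sequence in $H^{\frac{1}{2}}(\mathbb{S}^1)$; the balancing condition then follows from the case $k=2$ exactly as in the paper's corollary.
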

Notice that \eqref{eq: proof-characterization-stationarity-Fourier-intro}, \eqref{eq: balancing condition} are true in particular for half harmonic maps (which are stationary as they are smooth, see \cite{Sub-criticality}). We remark that \eqref{eq: balancing condition} can be interpreted as a balancing condition analogous to the Pohozaev identity
\begin{align}
    \int_{\partial B_r}\left\lvert\frac{1}{r}\partial_\theta u\right\rvert^2=\int_{\partial B_r}\left\lvert\partial_r u\right\rvert^2
\end{align}
for harmonic functions in two dimensional domains,
which follows from the stationarity under dilations in the domain. Equation \eqref{eq: balancing condition} plays an important role in the study of the compactness properties of horizontal half harmonic maps, in particular for showing that there is no dissipation of energy in the "neck regions" along a bubbling sequence (see \cite{dalio2016pohozaevtype}, see also \cite{Laurain-Riviere} for the analogous argument for harmonic maps in dimension 2). For other fractional counterparts of the Pohozaev identity see \cite{RosOton-Serra} (see also Section 5 in \cite{gaia-MT}).\\
Finally, the weak formulation of the stationarity condition for $E_\frac{1}{2}$ will allow us to exploit the symmetry properties of the half Dirichlet energy to deduce Noether theorems for stationary points (Theorem \ref{prop_final_version_Noether_thm_vardom_12}), in analogy to \cite{Helein-book}, Theorem 1.3.6 (see also the discussion thereafter).

\textbf{Acknowledgements:} the present work is based on some chapters of the author's Master thesis \cite{gaia-MT}. The author would like to thank prof. Tristan Rivière, prof. Xavier Ros-Oton and Alessandro Pigati for their guidance and their support.

\section{The first inner variation}

Assume that $u\in C^\infty(\mathbb{S}^1)$, then 
\begin{align}
    \frac{d}{dt}\bigg\vert_{t=0}E_\frac{1}{2}(u\circ\varphi_t)
    =&\int_{\mathbb{S}^1}\frac{d}{dt}\bigg\vert_{t=0}\left\lvert(-\Delta)^\frac{1}{4}u\circ\varphi_t\right\rvert^2
    =\int_{\mathbb{S}^1}(-\Delta)^\frac{1}{4}u\cdot(-\Delta)^\frac{1}{4}(u'X)\\
    \nonumber
    =&\int_{\mathbb{S}^1}(-\Delta)^{\frac{1}{2}}u\cdot u'X.
\end{align}
For any such $u$ we set
\begin{align}
    \mathcal{V}_\frac{1}{2}(u):=(-\Delta)^{\frac{1}{2}}u\cdot u'.
\end{align}
While the energy $E_\frac{1}{2}(u)$ is defined naturally for any $u\in H^\frac{1}{2}(\mathbb{S}^1)$, it is not immediately clear how to generalize the definition of $\mathcal{V}_\frac{1}{2}(u)$ to a general $u\in H^\frac{1}{2}(\mathbb{S}^1)$.
The following results will allow us to extend the definition of $\mathcal{V}_\frac{1}{2}(u)$ by continuity, in a distributional sense.\\
\begin{defn}
    Let $s\in \left(0,\frac{1}{2}\right]$. For any $a,b\in C^\infty(\mathbb{S}^1)$ set
    \begin{align}
        D_s(a,b)=(-\Delta)^sa\, b-(-\Delta)^sb\,a.
    \end{align}
\end{defn}
The algebraic structure of $D_s(a,b)$ will allow us to derive non-trivial estimates for it in terms of Sobolev norms of $a$ and $b$ (see Lemma \ref{lem: estimate for div} and Lemma \ref{lem: estimate div_1/commutator}), which permit to extend by continuity the meaning of $D_s(a,b)$ in a distributional sense, even when $a,b$ have low regularity. Observe that the operator $D_s$ is related to the fractional divergence introduced in \cite{Mazowiecka-Schikorra} (see Lemma \ref{lem: link-fractional-divergence}).
\begin{lem} For any $u,\varphi\in C^\infty(\mathbb{S}^1)$ we have 
\begin{align}
    \left\lvert \int_{\mathbb{S}^1}(-\Delta)^s u\cdot u'\varphi\right\rvert\leq C\lVert \varphi'\rVert_{\mathbb{A}}\lVert u\rVert_{H^s}^2
\end{align}
for some independent constant $C$.
\end{lem}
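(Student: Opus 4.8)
The plan is to pass to Fourier coefficients and reduce the whole inequality to a pointwise bound on the symbol of the trilinear expression. Writing $u=\sum_m\widehat u(m)e^{im\theta}$ and $\varphi=\sum_k\widehat\varphi(k)e^{ik\theta}$, the factors $(-\Delta)^su$, $u'$ and $\varphi$ carry Fourier coefficients $|m|^{2s}\widehat u(m)$, $in\,\widehat u(n)$ and $\widehat\varphi(k)$; since $u,\varphi\in C^\infty$ all the sums below converge absolutely, and orthogonality retains only the frequencies with $m+n+k=0$, so that, up to a normalization constant,
\begin{align}
\int_{\mathbb{S}^1}(-\Delta)^su\cdot u'\,\varphi=c\sum_{m,n\neq0}(in)\,|m|^{2s}\,\widehat u(m)\cdot\widehat u(n)\,\widehat\varphi\bigl(-(m+n)\bigr).
\end{align}
Because $\widehat u(m)\cdot\widehat u(n)$ and $\widehat\varphi(-(m+n))$ are symmetric in $(m,n)$, I would first symmetrize the remaining factor, replacing $(in)|m|^{2s}$ by the genuine symbol
\begin{align}
\sigma(m,n)=\tfrac{i}{2}\bigl(n\,|m|^{2s}+m\,|n|^{2s}\bigr).
\end{align}

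The crux of the argument is the pointwise estimate
\begin{align}
|\sigma(m,n)|\le C\,|m+n|\,|m|^{s}\,|n|^{s},\qquad m,n\in\mathbb{Z}\setminus\{0\},
\end{align}
with $C$ absolute. I would prove it by homogeneity (treating $m,n$ as real and normalizing one of them to $1$), distinguishing whether $m$ and $n$ have equal or opposite signs. In the equal–sign case it reduces to $t^{s}+t^{1-s}\le 2(1+t)$ for $t>0$, which is immediate since both $t^{s}$ and $t^{1-s}$ are bounded by $1+t$. The opposite–sign case is the delicate one: after extracting $|m|^{s}|n|^{s}$ it reduces to the cancellation inequality $|t^{1-s}-t^{s}|\le|t-1|$, which holds because for $t\ge1$ one has $t^{1-s}-t^{s}\le t^{1-s}-1\le t-1$ (using $0<s\le\tfrac12$, so $t^{s}\ge1$ and $t^{1-s}\le t$), and the case $t\le1$ follows by the symmetry $t\mapsto t^{-1}$. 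This is the step I expect to be the main obstacle, as it is exactly here that the structure of $\sigma$ — rather than of the unsymmetrized factor $n|m|^{2s}$ — is indispensable: when $m$ and $n$ nearly cancel the naive factor is of size $|n||m|^{2s}$, far larger than the target $|m+n||m|^{s}|n|^{s}$, and only the cancellation built into $\sigma$ recovers the gain. Note that at $s=\tfrac12$ the symbol even vanishes identically on opposite signs.

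Granting the symbol bound, the conclusion is soft. Using $|m+n|\,|\widehat\varphi(-(m+n))|=|\widehat{\varphi'}(-(m+n))|$ and abbreviating $a(m)=|m|^{s}|\widehat u(m)|$, the estimate collapses to
\begin{align}
\left|\int_{\mathbb{S}^1}(-\Delta)^su\cdot u'\,\varphi\right|\le C\sum_{j}\bigl|\widehat{\varphi'}(-j)\bigr|\sum_{m}a(m)\,a(j-m)=C\sum_{j}\bigl|\widehat{\varphi'}(-j)\bigr|\,(a*a)(j),
\end{align}
where the inner sum is a discrete convolution. I would bound $(a*a)(j)\le\|a\|_{\ell^{2}}^{2}$ pointwise by Cauchy–Schwarz, pull this constant out of the $j$–sum, and recognize $\sum_{j}|\widehat{\varphi'}(-j)|=\|\varphi'\|_{\mathbb{A}}$ and $\|a\|_{\ell^{2}}^{2}=[u]_{H^{s}}^{2}\le\|u\|_{H^{s}}^{2}$, which is precisely the claimed bound. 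The whole scheme is the discrete analogue of a paraproduct estimate, the Wiener norm $\|\varphi'\|_{\mathbb{A}}$ being exactly what makes the final $\ell^{1}$–$\ell^{\infty}$ duality close; the same symmetrization is what will later produce the estimates for $D_s$.
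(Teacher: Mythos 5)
Your proof is correct, and it takes a genuinely different route from the paper's. The paper never touches the trilinear Fourier sum directly: it rewrites the integrand algebraically as $(-\Delta)^s u\cdot u'=\tfrac{1}{2}\tfrac{d}{dx}\lvert(-\Delta)^{s/2}u\rvert^2+D_{s/2}\bigl((-\Delta)^{s/2}u,\,u'\bigr)$, integrates the exact-derivative term by parts against $\varphi$, and controls the $D_{s/2}$ term by the commutator estimate of Lemma \ref{lem: estimate for div}. Your symmetrization is in fact the Fourier-side counterpart of exactly this splitting, since the symmetrized symbol decomposes as
\begin{align}
\sigma(m,n)=\frac{i}{2}(m+n)\lvert m\rvert^{s}\lvert n\rvert^{s}+\frac{i}{2}\left(\lvert m\rvert^{s}-\lvert n\rvert^{s}\right)\left(n\lvert m\rvert^{s}-m\lvert n\rvert^{s}\right),
\end{align}
whose first summand is the symbol of $\tfrac{1}{2}\tfrac{d}{dx}\lvert(-\Delta)^{s/2}u\rvert^2$ and whose second is that of $D_{s/2}\bigl((-\Delta)^{s/2}u,\,u'\bigr)$; your pointwise bound $\lvert\sigma(m,n)\rvert\le C\lvert m+n\rvert\,\lvert m\rvert^{s}\lvert n\rvert^{s}$ handles both pieces at once, and the delicate opposite-sign case you isolate (the inequality $\lvert t^{1-s}-t^{s}\rvert\le\lvert t-1\rvert$, valid for $s\le\tfrac12$, which is the relevant range) is precisely the cancellation that the paper delegates to the mean-value-theorem step inside the proof of Lemma \ref{lem: estimate for div}. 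The trade-off: your argument is self-contained and more elementary — one pointwise symbol inequality, then Cauchy--Schwarz on the convolution and $\ell^1$--$\ell^\infty$ duality against $\lVert\varphi'\rVert_{\mathbb{A}}$ — whereas the paper's detour through $D_{s/2}$ reuses machinery it must build anyway: Lemmas \ref{lem: estimate for div} and \ref{lem: estimate div_1/commutator} are what later give $D_\frac{1}{2}(a,b)$ a distributional meaning for low-regularity arguments and drive the Noether theorem (Theorem \ref{prop_final_version_Noether_thm_vardom_12}), so the paper obtains the present lemma essentially for free from results it needs regardless, while your route proves only this one statement.
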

For the definition of the space $\mathbb{A}$, see Definition \ref{defn: Wiener-algebra}.
\begin{proof}
Notice that we can rewrite $(-\Delta)^\frac{1}{2}u\cdot u'$ as
\begin{align}
    u'\cdot(-\Delta)^su=&\frac{1}{2}\frac{d}{dx}\lvert(-\Delta)^\frac{s}{2}u\rvert^2-(-\Delta)^\frac{s}{2}u'\cdot(-\Delta)^\frac{s}{2}u+u'\cdot (-\Delta)^su\\
    \nonumber
    =&\frac{1}{2}\frac{d}{dx}\lvert(-\Delta)^\frac{s}{2}u\rvert^2+D_\frac{s}{2}\left((-\Delta)^\frac{s}{2}u, u'\right).
\end{align}
The statement now follows form Lemma \ref{lem: estimate for div}.
\end{proof}

\begin{lem}\label{lem: continuity of the derivative}
    Let $X$ be a smooth vector field on $\mathbb{S}^1$ and let $\phi_t$ denote its flow. Then the function
    \begin{align}
        t\mapsto E_\frac{1}{2}(u\circ\varphi_t)
    \end{align}
    is of class $C^1$ and the map
    \begin{align}
        \mathscr{N}: H^\frac{1}{2}(\mathbb{S}^1)\to \mathbb{R},\quad u\mapsto \frac{d}{dt}\bigg\vert_{t=0}E_\frac{1}{2}(u\circ\phi_t)
    \end{align}
    is continuous.
\end{lem}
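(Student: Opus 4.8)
The plan is to reduce everything to the quadratic functional
\[Q_\varphi(w):=\int_{\mathbb{S}^1}(-\Delta)^{1/2}w\cdot w'\,\varphi,\]
for which the preceding lemma supplies the a priori bound $|Q_\varphi(w)|\le C\|\varphi'\|_{\mathbb A}\|w\|_{H^{1/2}}^2$. The first step is to polarise this estimate. Since $Q_\varphi$ is a genuine quadratic form in $w$, its associated symmetric bilinear form $B_\varphi(a,b):=\tfrac12\int_{\mathbb{S}^1}\bigl((-\Delta)^{1/2}a\cdot b'+(-\Delta)^{1/2}b\cdot a'\bigr)\varphi$ satisfies $B_\varphi(w,w)=Q_\varphi(w)$, and polarising the quadratic bound as $B_\varphi(a,b)=\tfrac14\bigl(Q_\varphi(a+b)-Q_\varphi(a-b)\bigr)$ and then optimising over the scaling $(a,b)\mapsto(sa,s^{-1}b)$ yields the bilinear estimate
\[|B_\varphi(a,b)|\le C\|\varphi'\|_{\mathbb A}\|a\|_{H^{1/2}}\|b\|_{H^{1/2}}.\]
This is the only analytic input; everything that follows is soft.

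For the $C^1$ claim I would first record the derivative formula for smooth data. Writing $\psi_t:=(X\circ\varphi_t)/\varphi_t'$, a smooth function on $\mathbb{S}^1$ since $\varphi_t$ is a diffeomorphism, the chain rule gives $\partial_t(u\circ\varphi_t)=(u\circ\varphi_t)'\psi_t$, so for $u\in C^\infty$ the computation in the excerpt generalises to $\frac{d}{dt}E_{1/2}(u\circ\varphi_t)=c\,Q_{\psi_t}(u\circ\varphi_t)$, with $c$ the normalising constant. The key observation is that although the curve $t\mapsto u\circ\varphi_t$ is \emph{not} differentiable as an $H^{1/2}$-valued curve — its formal velocity $u'\psi_t$ lives only in $H^{-1/2}$ — the right-hand side is nonetheless controlled by $\|u\circ\varphi_t\|_{H^{1/2}}^2$ thanks to the estimate above.

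To pass to general $u\in H^{1/2}$ I would approximate by $u_n\to u$ in $H^{1/2}$ with $u_n\in C^\infty$ and set $f_n(t):=E_{1/2}(u_n\circ\varphi_t)$, $f(t):=E_{1/2}(u\circ\varphi_t)$. On any compact $t$-interval the composition operators $w\mapsto w\circ\varphi_t$ are uniformly bounded on $H^{1/2}$ (composition with a smooth diffeomorphism preserves $H^{1/2}$, with operator norm depending continuously on $\varphi_t$) and $\|\psi_t'\|_{\mathbb A}$ is uniformly bounded, because $\varphi_t$ and $\psi_t$ depend smoothly on $t$. Continuity of the quadratic form $E_{1/2}$ then gives $f_n\to f$ uniformly, while the bilinear estimate applied to $f_n'(t)-f_m'(t)=c\,B_{\psi_t}\bigl((u_n-u_m)\circ\varphi_t,(u_n+u_m)\circ\varphi_t\bigr)$ shows that $(f_n')$ is uniformly Cauchy, hence converges uniformly to a continuous limit $g$. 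The standard theorem on uniform convergence of derivatives then gives that $f$ is $C^1$ with $f'=g$; specialising to $t=0$ (where $\varphi_0=\mathrm{id}$, $\psi_0=X$) identifies $\mathscr N(u)=f'(0)=c\,Q_X(u)$, understood through this continuous extension.

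Finally, continuity of $\mathscr N$ is immediate from the bilinear estimate: writing $\mathscr N(u)=c\,B_X(u,u)$ and using $B_X(u,u)-B_X(v,v)=B_X(u+v,u-v)$ gives
\[|\mathscr N(u)-\mathscr N(v)|\le C\|X'\|_{\mathbb A}\bigl(\|u\|_{H^{1/2}}+\|v\|_{H^{1/2}}\bigr)\|u-v\|_{H^{1/2}},\]
so $\mathscr N$ is locally Lipschitz, in particular continuous. The main obstacle is conceptual rather than computational: because $t\mapsto u\circ\varphi_t$ fails to be differentiable in $H^{1/2}$, one cannot differentiate under the integral naively, and the argument succeeds only because the cancellation encoded in the preceding lemma turns the a priori ill-defined pairing $\int_{\mathbb{S}^1}(-\Delta)^{1/2}w\cdot w'\psi$ into a bounded quadratic form. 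The remaining work is the routine verification that the composition-operator norms on $H^{1/2}$ and the norms $\|\psi_t'\|_{\mathbb A}$ are uniform in $t$ on compact intervals.
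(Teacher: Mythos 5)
Your proposal is correct, but it follows a genuinely different route from the paper. The paper does not use the commutator estimate of the preceding lemma at all: it works directly with the Gagliardo representation of $E_\frac{1}{2}$ as a double integral, changes variables under the flow, and exploits pointwise bounds on the distorted kernel (namely $\left\lvert\,\lvert\phi_{-t}'(x)\rvert\lvert\phi_{-t}'(y)\rvert-1\right\rvert\le C\lvert t\rvert$ and the analogous bound for the difference of the two kernels) to show that the difference quotients of $E_\frac{1}{2}(u_n\circ\phi_t)$ and of $E_\frac{1}{2}(u\circ\phi_t)$ differ, uniformly in $t$, by a constant times the $L^1$-distance between the Gagliardo integrands of $u_n$ and $u$; a Fatou (Scheff\'e/Brezis--Lieb type) argument shows that this distance vanishes as $n\to\infty$, and an interchange of $\limsup$/$\liminf$ in $n$ and $t$ concludes. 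You instead take the preceding lemma as the sole analytic input, polarize it into the bilinear bound $\lvert B_\varphi(a,b)\rvert\le C\lVert\varphi'\rVert_{\mathbb{A}}\lVert a\rVert_{H^{1/2}}\lVert b\rVert_{H^{1/2}}$, write $f_n'(t)=c\,Q_{\psi_t}(u_n\circ\varphi_t)$ with $\psi_t=(X\circ\varphi_t)/\varphi_t'$ (this computation is correct, with $c=2$; the paper's own first-variation display at the start of Section 2 silently drops this factor, which is immaterial), and invoke the classical theorem on uniform convergence of derivatives. Both are approximation arguments and yours is sound: the only facts you leave unproved --- uniform boundedness of $w\mapsto w\circ\varphi_t$ on $H^\frac{1}{2}$ and of $\lVert\psi_t'\rVert_{\mathbb{A}}$ for $t$ in a compact interval --- are indeed routine, the first being precisely the paper's own change-of-variables computation. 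Your route buys two things: a cleaner $C^1$ statement ($f'$ is exhibited as a uniform limit of continuous functions, whereas the paper's ``and thus at any $t$'' leaves continuity of $f'$ in $t$ somewhat implicit), and a quantitative strengthening, since $\mathscr{N}$ comes out locally Lipschitz on bounded subsets of $H^\frac{1}{2}$ rather than merely continuous. What the paper's route buys is self-containedness: your proof depends, through the preceding lemma, on the commutator estimate of Lemma \ref{lem: estimate for div} in the appendix, while the paper's proof of this lemma needs nothing beyond the double-integral representation and Fatou's lemma.
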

\begin{proof}
    First we recall that for any $u\in H^{\frac{1}{2}}(\mathbb{S}^1)$, the energy $E_\frac{1}{2}$ can be rewritten as
    \begin{align}
        E_\frac{1}{2}(u)=\frac{1}{2\pi}\int_{\mathbb{S}^1}\int_{\mathbb{S}^1}\frac{\lvert u(x)-u(y)\rvert^2}{\lvert x-y\rvert^{2}}dxdy
    \end{align}
    (see Lemma B 8 in \cite{DaLio-Pigati}). Let $\{u_n\}_{n\in \mathbb{N}}$ be a sequence of smooth functions approximating $u$ in $H^s(\mathbb{S}^1)$.
    For any $t\in \mathbb{R}$ a change of variables yields
    \begin{align}
        E_\frac{1}{2}(u\circ\phi_t)=&\frac{1}{2\pi}\int_{\mathbb{S}^1}\int_{\mathbb{S}^1}\frac{\lvert u(\phi_t(x))-u(\phi_t(y))\rvert^2}{\lvert x-y\rvert^{2}}dxdy\\
        \nonumber
        =&\frac{1}{2\pi}\int_{\mathbb{S}^1}\int_{\mathbb{S}^1}\frac{\lvert u(x')-u(y')\rvert^2}{\lvert \phi_{-t}(x')-\phi_{-t}(y')\rvert^{2}}\lvert \phi_{-t}'(x')\rvert \lvert \phi_{-t}'(y')\rvert dx'dy'.
    \end{align}
    Notice that when $t$ is sufficiently small we have
    \begin{align}
        \left\lvert \lvert \phi_{-t}'(x)\rvert \lvert \phi_{-t}'(y)\rvert-1\right\rvert\leq C\lvert t\rvert,\qquad \frac{1}{\lvert \phi_{-t}(x)-\phi_{-t}(y)\rvert^{2}}\leq \frac{C}{\lvert x-y\rvert^2}
    \end{align}
    and
    \begin{align}
        \left\lvert\frac{1}{\lvert \phi_{-t}(x)-\phi_{-t}(y)\rvert^{2}}-\frac{1}{\lvert x-y\rvert^{2}}\right\rvert\leq C\frac{\lvert t\rvert}{\lvert x-y\rvert^{2}}
    \end{align}
    for any $x,y\in \mathbb{S}^1$, for a constant $C$ independent form $x,y,t$.
    Therefore 
    \begin{align}\label{eq: estimate-for-differentiability}
        &\left\lvert E_s(u_n\circ\phi_t)-E_s(u_n)-(E_s(u\circ\phi_t)-E_s(u))\right\rvert=\\
        \nonumber
        &\left\lvert\frac{1}{2\pi}\int_{\mathbb{S}^1}\int_{\mathbb{S}^1}\frac{\lvert u_n(x)-u_n(y)\rvert^2-\lvert u(x)-u(y)\rvert^2}{\lvert \phi_{-t}(x)-\phi_{-t}(y)\rvert^{2}}\lvert \phi_{-t}'(x)\rvert \lvert \phi_{-t}'(y)\rvert\right.\\
        \nonumber
        &\left.-\frac{1}{2\pi}\int_{\mathbb{S}^1}\int_{\mathbb{S}^1}\frac{\lvert u_n(x)-u_n(y)\rvert^2-\lvert u(x)-u(y)\rvert^2}{\lvert x-y\rvert^{2}}dxdy\right\rvert\\
        \nonumber
        &\leq\frac{1}{2\pi}\int_{\mathbb{S}^1}\int_{\mathbb{S}^1}\left\lvert\frac{\lvert u_n(x)-u_n(y)\rvert^2-\lvert u(x)-u(y)\rvert^2}{\lvert \phi_{-t}(x)-\phi_{-t}(y)\rvert^{2}}\right\rvert\left\lvert\lvert \phi_{-t}'(x)\rvert \lvert \phi_{-t}'(y)\rvert-1\right\rvert dxdy\\
        \nonumber
        &+\frac{1}{2} \int_{\mathbb{S}^1}\int_{\mathbb{S}^1}\left\lvert\lvert u_n(x)-u_n(y)\rvert^2-\lvert u(x)-u(y)\rvert^2\right\rvert\left\lvert\frac{1}{\lvert \phi_{-t}(x)-\phi_{-t}(y)\rvert^{2}}-\frac{1}{\lvert x-y\rvert^{2}}\right\rvert dxdy\\
        \nonumber
        &\leq C\lvert t\rvert \int_{\mathbb{S}^1}\int_{\mathbb{S}^1}\left\lvert\frac{\lvert u_n(x)-u_n(y)\rvert^2-\lvert u(x)-u(y)\rvert^2}{\lvert x-y\rvert^{2}}\right\rvert dxdy.
    \end{align}
We claim that the integral in the last line of \eqref{eq: estimate-for-differentiability} converges to $0$ as $n\to \infty$.
To this end we define
\begin{equation}
f: \mathbb{S}^1\times \mathbb{S}^1\to \mathbb{R},\quad(x,y)\mapsto \frac{\lvert u(x)-u(y)\rvert^2}{\left\lvert x-y\right\rvert^{2}}
\end{equation}
and for any $n\in\mathbb{N}$
\begin{equation}
f_n: \mathbb{S}^1\times \mathbb{S}^1\to \mathbb{R},\quad(x,y)\mapsto \frac{\lvert u_n(x)-u_n(y)\rvert^2}{\left\lvert x-y\right\rvert^{2}}.
\end{equation}
To prove the claim, observe that for any $n\in\mathbb{N}$
\begin{equation}
\lvert f\rvert+\lvert f_n\rvert-\lvert f_n-f\rvert\geq 0.
\end{equation}
Since $f_n\to f$ a.e., Fatou's Lemma implies
\begin{equation}
\int_{\mathbb{S}^1}\int_{\mathbb{S}^1} 2\lvert f\rvert\leq\liminf_{n\to\infty}\int_{\mathbb{S}^1}\int_{\mathbb{S}^1} \lvert f\rvert+\lvert f_n\rvert-\lvert f_n-f\rvert.
\end{equation}
Since $u_n\to u$ in $H^\frac{1}{2}(\mathbb{S}^1)$ , $\lVert f_n\rVert_{L^1}\to\lVert f\rVert_{L^1}$ as $n\to \infty$, and thus we conclude that
\begin{equation}
\limsup_{n\to\infty}\int_{\mathbb{S}^1}\int_{\mathbb{S}^1}\lvert f_n-f\rvert=0.
\end{equation}
This concludes the proof of the claim.
Now we observe that dividing the first and the last line of \eqref{eq: estimate-for-differentiability} by $t\neq 0$ and letting $t$ tend to $0$, we obtain
\begin{equation}
\begin{split}
&\limsup_{n\to \infty} \limsup_{t\to 0} \left( \frac{E_\frac{1}{2}(u_n\circ  \phi_t)-E_\frac{1}{2}(u_n)}{t}-\frac{E_\frac{1}{2}(u\circ \phi_t)-E_\frac{1}{2}(u)}{t}\right)\leq 0,\\
&\liminf_{n\to \infty} \liminf_{t\to 0} \left( \frac{E_\frac{1}{2}(u_n\circ  \phi_t)-E_\frac{1}{2}(u_n)}{t}-\frac{E_\frac{1}{2}(u\circ \phi_t)-E_\frac{1}{2}(u)}{t}\right)\geq 0.
\end{split}
\end{equation}
For any $n\in \mathbb{N}$, since $u_n$ is smooth, $E(u_n\circ\phi_t)$ is differentiable, therefore
\begin{equation}
\begin{split}
&\limsup_{n\to \infty} \frac{d}{dt}\bigg\vert_{t=0}E_\frac{1}{2}(u_n\circ\phi_t)+\limsup_{t\to 0}-\frac{E_\frac{1}{2}(u\circ \phi_t)-E_\frac{1}{2}(u)}{t}\leq 0,\\
&\liminf_{n\to \infty} \frac{d}{dt}\bigg\vert_{t=0}E_\frac{1}{2}(u_n\circ\phi_t)+\liminf_{t\to 0}-\frac{E_\frac{1}{2}(u\circ \phi_t)-E_\frac{1}{2}(u)}{t}\geq 0.
\end{split}
\end{equation}
Therefore $E(u\circ\phi_t)$ is differentiable in $t$ in $0$ (and thus at any $t$), and
\begin{equation}\label{eq_lem_continuity_derivative_energy_2}
\frac{d}{dt}\bigg\vert_{t=0} E_\frac{1}{2}(u_n\circ \phi_t)\to \frac{d}{dt}\bigg\vert_{t=0} E_\frac{1}{2}(u\circ \phi_t)
\end{equation}
as $n\to \infty$. This implies that the map $\mathscr{N}$ is continuous.
\end{proof}
We can then extend the definition of the first inner variation of $E_\frac{1}{2}$ as follows: let $u\in H^\frac{1}{2}(\mathbb{S}^1)$, let $\{u_n\}_{n\in \mathbb{N}}$ be a sequence of smooth maps converging to $u$ in $H^\frac{1}{2}(\mathbb{S}^1)$. Then
\begin{align}\label{eq: definition of Vs}
    \mathcal{V}_\frac{1}{2}(u):=\lim_{n\to\infty}(-\Delta)^\frac{1}{2}u_n u_n',
\end{align}
where the limit is taken in the space of distributions on $\mathbb{S}^1$. Notice that by Lemma \ref{lem: continuity of the derivative} the definition of $\mathcal{V}_s$ does not depend on the choice of the approximating sequence.
We call the distribution $\mathcal{V}_\frac{1}{2}(u)$ \textbf{the first inner variation of $E_\frac{1}{2}$ at $u$}
For any smooth vector field $X$ with flow $\phi_t$ we have
\begin{align}
    \langle\mathcal{V}_\frac{1}{2}(u), X\rangle
    =&\lim_{n\to\infty}\int_{\mathbb{S}^1}(-\Delta)^\frac{1}{2} u_n u_n' X=\lim_{n\to\infty}\frac{d}{dt}\bigg\vert_{t=0}E_\frac{1}{2}(u_n\circ\phi_t)\\
    \nonumber
    =&\frac{d}{dt}\bigg\vert_{t=0}E_\frac{1}{2}(u\circ\phi_t).
\end{align}
Recall that a map $u\in H^\frac{1}{2}(\mathbb{S}^1)$ is called \textbf{a stationary point of $E_\frac{1}{2}$} if $\frac{d}{dt}\vert_{t=0}E_\frac{1}{2}(u\circ\phi_t)=0$. The above computation shows that this is equivalent to $\mathcal{V}_\frac{1}{2}(u)=0$.\\
We summarize the previous discussion in the following proposition.
\begin{prop}\label{prop: first-variation-formula}
    Let $u\in H^\frac{1}{2}(\mathbb{S}^1)$. Let $X$ be a smooth vector field on $\mathbb{S}^1$ and let $\phi_t$ denote its flow. Then 
    \begin{align}
    \langle\mathcal{V}_\frac{1}{2}(u), X\rangle=\frac{d}{dt}\bigg\vert_{t=0}E_\frac{1}{2}(u\circ\phi_t),
\end{align}
    where $\mathcal{V}_\frac{1}{2}(u)$ is the distribution defined in \eqref{eq: definition of Vs}.
    In particular, $u$ is a stationary point of $E_\frac{1}{2}$ if and only if $\mathcal{V}_\frac{1}{2}(u)=0$.\\
    The map
    \begin{align}
        \mathcal{V}_\frac{1}{2}: H^\frac{1}{2}(\mathbb{S}^1)\to \mathscr{D}'(\mathbb{S}^1),\quad u\mapsto \mathcal{V}_\frac{1}{2}(u)
    \end{align}
    is continuous.
\end{prop}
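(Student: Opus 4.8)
The plan is to assemble the three assertions directly from the two preceding lemmas, so the proof is essentially careful bookkeeping in $\mathscr{D}'(\mathbb{S}^1)$ rather than a new estimate. Throughout I fix a smooth vector field $X$ with flow $\phi_t$ and, by density, a sequence of smooth maps $u_n \to u$ in $H^\frac{1}{2}(\mathbb{S}^1)$.

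First I would justify that \eqref{eq: definition of Vs} defines a genuine distribution and, at the same time, establish the duality identity. For each fixed test field $X$, the opening computation of the section gives $\int_{\mathbb{S}^1}(-\Delta)^\frac{1}{2}u_n\, u_n'\, X = \frac{d}{dt}\big\vert_{t=0}E_\frac{1}{2}(u_n\circ\phi_t)=\mathscr{N}(u_n)$ for every $n$; since $\mathscr{N}$ is continuous by Lemma \ref{lem: continuity of the derivative} and $\{u_n\}$ is Cauchy in $H^\frac{1}{2}$, the numbers $\mathscr{N}(u_n)$ converge, with limit $\mathscr{N}(u)=\frac{d}{dt}\big\vert_{t=0}E_\frac{1}{2}(u\circ\phi_t)$, which depends on $u$ alone and not on the chosen sequence. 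To see that $X\mapsto\lim_n\int_{\mathbb{S}^1}(-\Delta)^\frac{1}{2}u_n u_n' X$ is continuous on test functions, I would invoke the quadratic estimate proved above with $s=\frac{1}{2}$, namely $\big|\int_{\mathbb{S}^1}(-\Delta)^\frac{1}{2}u_n u_n' X\big|\le C\lVert X'\rVert_{\mathbb{A}}\lVert u_n\rVert_{H^\frac{1}{2}}^2$. Letting $n\to\infty$ and using $\lVert u_n\rVert_{H^\frac{1}{2}}\to\lVert u\rVert_{H^\frac{1}{2}}$ yields $|\langle\mathcal{V}_\frac{1}{2}(u),X\rangle|\le C\lVert X'\rVert_{\mathbb{A}}\lVert u\rVert_{H^\frac{1}{2}}^2$; since $\lVert X'\rVert_{\mathbb{A}}$ is dominated by a sufficiently high Sobolev norm of $X$, this confirms both that $\mathcal{V}_\frac{1}{2}(u)\in\mathscr{D}'(\mathbb{S}^1)$ and that $\langle\mathcal{V}_\frac{1}{2}(u),X\rangle=\frac{d}{dt}\big\vert_{t=0}E_\frac{1}{2}(u\circ\phi_t)$.

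The characterization of stationarity is then immediate. Identifying a smooth vector field on $\mathbb{S}^1$ with its scalar component (the tangent bundle being trivial), $u$ is stationary exactly when $\frac{d}{dt}\big\vert_{t=0}E_\frac{1}{2}(u\circ\phi_t)=0$ for every $X$, which by the duality identity means $\langle\mathcal{V}_\frac{1}{2}(u),X\rangle=0$ for every test function, i.e. $\mathcal{V}_\frac{1}{2}(u)=0$ in $\mathscr{D}'(\mathbb{S}^1)$. For the continuity of $u\mapsto\mathcal{V}_\frac{1}{2}(u)$, I would take $u_k\to u$ in $H^\frac{1}{2}$, fix an arbitrary smooth $X$, and note that by the duality identity each pairing equals $\mathscr{N}(u_k)$; Lemma \ref{lem: continuity of the derivative} then gives $\mathscr{N}(u_k)\to\mathscr{N}(u)=\langle\mathcal{V}_\frac{1}{2}(u),X\rangle$, which is precisely convergence $\mathcal{V}_\frac{1}{2}(u_k)\to\mathcal{V}_\frac{1}{2}(u)$ in the weak-$*$ topology of $\mathscr{D}'(\mathbb{S}^1)$.

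The step I expect to need the most care is the first: the continuity of $\mathscr{N}$ controls only one test field at a time, so passing from a family of numerical limits to a single distribution requires the uniform quadratic bound, which is what upgrades pointwise-in-$X$ convergence to a bona fide element of $\mathscr{D}'(\mathbb{S}^1)$ and simultaneously forces independence of the approximating sequence. No new analytic difficulty arises beyond what the two lemmas already provide; the only real pitfall is conflating the weak-$*$ topology with a stronger one, so I would state explicitly that the final continuity assertion is meant in the weak-$*$ sense.
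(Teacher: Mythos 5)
Your proposal is correct and takes essentially the same route as the paper: the duality identity and the independence from the approximating sequence come from the continuity of $\mathscr{N}$ (Lemma \ref{lem: continuity of the derivative}), while the quadratic estimate $\left\lvert \int_{\mathbb{S}^1}(-\Delta)^\frac{1}{2} u\cdot u'\varphi\right\rvert\leq C\lVert \varphi'\rVert_{\mathbb{A}}\lVert u\rVert_{H^{1/2}}^2$ from the first lemma of the section is what makes the pointwise-in-$X$ limit a genuine element of $\mathscr{D}'(\mathbb{S}^1)$. Your explicit handling of that last point, together with the weak-$*$ reading of the continuity of $\mathcal{V}_\frac{1}{2}$, is precisely the role these lemmas play in the paper's own (implicit) argument.
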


\section{The fractional Hopf differential}
In this section we investigate how stationary points of $E_\frac{1}{2}$ are related to stationary points of the Dirichlet energy
\begin{align}
    E(v)=\frac{1}{2}\int_{D^2}\lvert\nabla v\rvert^2
\end{align}
(with values in an Euclidean space $\mathbb{R}^k$). Recall that a function $v\in H^1(D^2)$ is a stationary point of $E$ (for compactly supported variations in the domain) if and only if it its Hopf differential
\begin{equation}\label{eq_definition_Hopf_differential_D2}
\mathscr{H}(v)=\frac{\partial v}{\partial z}\cdot \frac{\partial v}{\partial z}
\end{equation}
is holomorphic (see Lemma 1.1 in \cite{Schoen-Analtic-aspects}).
Notice that in polar coordinates, the Hopf differential of a function $v\in H^1(D^2)$ is given by
\begin{equation}\label{eq_comm_Hopf_diff_in_polar_coordinates}
\mathscr{H}(v)(z)=\frac{\overline{z}^2}{4 r^2}\left[\lvert \partial_rv(z)\rvert^2-\frac{1}{r^2}\lvert\partial_\theta v(z)\rvert^2-2i\partial_r v(z)\cdot\frac{1}{r}\partial_\theta v(z)\right]
\end{equation}
for any $z\in D^2$, where $z=re^{i\theta}$.
Notice also that $\mathscr{H}(v)=0$ if and only if $v$ is weakly conformal.\\
If $v\in H^1(D^2)$, $\mathscr{H}(v)$ is a function in $L^1(D^2)$. If $v$ is harmonic, however, it is possible to define the trace of $\mathscr{H}(v)$ as follows.
First notice that if $v$ is harmonic then it is a stationary point of $E$ and thus
\begin{align}
    \frac{\partial}{\partial {\overline{z}}} \mathscr{H}(v)=0.
\end{align}
If $v$ is smooth up to the boundary of $D^2$,
for any complex valued $\varphi\in C^\infty(\overline{D^2})$ there holds
\begin{align}\label{eq: computation-trace-Hopf}
    \int_{\partial D^2}\mathscr{H}(v)\varphi d\theta=& \int_{\partial D^2}\mathscr{H}(v)\varphi \frac{1}{iz}dz=\int_{D^2}\frac{\partial}{\partial \overline{z}}\left(\mathscr{H}(v)\varphi\frac{1}{iz}\right)d\overline{z}\wedge dz\\
    \nonumber
    =&2\int_{D^2}\frac{1}{z}\mathscr{H}(v)\frac{\partial\varphi}{\partial \overline{z}} dx^2
\end{align}
\begin{defn}\label{def: frac-Hopf-differential}
    For any $u\in H^\frac{1}{2}(\mathbb{S}^1)$, the \textbf{fractional Hopf differential of $u$} is the distribution given by
    \begin{align}
        \mathscr{H}_\frac{1}{2}(u)[\varphi]=2\int_{D^2}\frac{1}{z}\mathscr{H}(\tilde{u})\frac{\partial\tilde{\varphi}}{\partial \overline{z}} dx^2,
    \end{align}
    where $\tilde{u}$ and $\tilde{\varphi}$ denote the harmonic extensions in $D^2$ of $u$ and $\varphi$ respectively.
\end{defn}
Given $u\in H^\frac{1}{2}(\mathbb{S}^1)$, by the last inequality in computation \eqref{eq: computation-trace-Hopf}, $\mathscr{H}_\frac{1}{2}(u)[\varphi]$ does not change if we modify $\tilde{\varphi}$ smoothly inside of $D^2$. With the help of this observation we see that the map
\begin{align}
    \mathscr{H}_\frac{1}{2}: H^\frac{1}{2}(\mathbb{S}^1)\to\mathscr{D}',\quad u\mapsto \mathscr{H}_\frac{1}{2}(u)
\end{align}
is continuous. By computation \eqref{eq: computation-trace-Hopf}, if $u\in C^\infty(\mathbb{S}^1)$ and $\tilde{u}$ denotes its harmonic extension in $D^2$, then the distribution $\mathscr{H}_\frac{1}{2}(u)$ is represented by the trace of $\mathscr{H}_\frac{1}{2}(\tilde{u})$ on $\partial D^2$. Therefore the map $\mathscr{H}_\frac{1}{2}$ extends continuously the operator assigning to any smooth $u$ the trace of the Hopf differential of its harmonic extension on $\partial D^2$.\\
Next we will see how the fractional Hopf differential $\mathscr{H}_\frac{1}{2}$ is related to the inner variations of $E_\frac{1}{2}$.
\begin{prop}\label{prop: frac-Hopf-diff}
    Let $u\in H^\frac{1}{2}(\mathbb{S}^1)$. Then
    \begin{align}\label{eq: frac-Hopf-diff-variation-statement}
        \mathscr{H}_\frac{1}{2}(u)=\frac{e^{-i2\theta}}{2i}\left(\mathcal{V}_\frac{1}{2}(u)+iH\left(\mathcal{V}_\frac{1}{2}(u)\right)\right).
    \end{align}
    In particular $u$ is a stationary point of $E_\frac{1}{2}$ if and only if $\mathscr{H}_\frac{1}{2}(u)=0$.
\end{prop}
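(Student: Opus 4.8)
The plan is to prove \eqref{eq: frac-Hopf-diff-variation-statement} first for $u\in C^\infty(\mathbb{S}^1)$ and then to pass to a general $u\in H^\frac{1}{2}(\mathbb{S}^1)$ by density. The map $\mathscr{H}_\frac{1}{2}$ is continuous (as observed after Definition \ref{def: frac-Hopf-differential}), $\mathcal{V}_\frac{1}{2}$ is continuous by Proposition \ref{prop: first-variation-formula}, and the Hilbert transform $H$ together with multiplication by $e^{-i2\theta}$ act continuously on $\mathscr{D}'(\mathbb{S}^1)$ (both are Fourier multipliers, respectively frequency shifts, with smooth adjoints); hence both sides of \eqref{eq: frac-Hopf-diff-variation-statement} depend continuously on $u$ and it suffices to match them on the dense subspace $C^\infty(\mathbb{S}^1)$.

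So fix $u\in C^\infty(\mathbb{S}^1)$ with harmonic extension $\tilde u$. The guiding idea is that, as recorded after Definition \ref{def: frac-Hopf-differential}, $\mathscr{H}_\frac{1}{2}(u)$ is represented by the boundary trace of the holomorphic function $\mathscr{H}(\tilde u)$, so I only need to compute that trace and recognise it in the claimed form. For the harmonic extension the Dirichlet--to--Neumann map gives $\partial_r\tilde u=(-\Delta)^\frac{1}{2}u$ and $\partial_\theta\tilde u=u'$ on $\partial D^2$ (indeed $\partial_r r^{|n|}\vert_{r=1}=|n|$ and $\partial_\theta e^{in\theta}=in\,e^{in\theta}$). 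Substituting these into the polar expression \eqref{eq_comm_Hopf_diff_in_polar_coordinates} and evaluating at $r=1$ yields
\begin{align}\label{eq: plan-trace}
    \mathscr{H}(\tilde u)\big\vert_{\partial D^2}=\frac{e^{-i2\theta}}{4}\left(\left\lvert(-\Delta)^\frac{1}{2}u\right\rvert^2-\lvert u'\rvert^2-2i\,(-\Delta)^\frac{1}{2}u\cdot u'\right),
\end{align}
where the last term is exactly $-2i\,\mathcal{V}_\frac{1}{2}(u)$.

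It then remains to identify the real part of \eqref{eq: plan-trace}. The key identity is
\begin{align}\label{eq: plan-key-identity}
    \left\lvert(-\Delta)^\frac{1}{2}u\right\rvert^2-\lvert u'\rvert^2=2H\left(\mathcal{V}_\frac{1}{2}(u)\right),
\end{align}
which I would deduce from the holomorphicity of $\mathscr{H}(\tilde u)$: its boundary trace lies in the Hardy space, i.e. has only non-negative Fourier modes, so $e^{i2\theta}\mathscr{H}(\tilde u)\vert_{\partial D^2}$ has vanishing Fourier coefficients at every frequency $\le 1$. Writing this function as $\tfrac14 A-\tfrac{i}{2}B$, with $A:=\lvert(-\Delta)^\frac{1}{2}u\rvert^2-\lvert u'\rvert^2$ and $B:=\mathcal{V}_\frac{1}{2}(u)$ both \emph{real}, the vanishing of the non-positive modes forces $\widehat A(j)=2i\,\widehat B(j)$ for $j<0$; complex conjugation then gives $\widehat A(j)=-2i\,\widehat B(j)$ for $j>0$, while the reality of $A$ and $B$ together with $\widehat G(0)=0$ forces $\widehat A(0)=\widehat B(0)=0$. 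These are precisely the relations $\widehat A(j)=-2i\,\operatorname{sgn}(j)\widehat B(j)=2\widehat{HB}(j)$, that is \eqref{eq: plan-key-identity}. Inserting \eqref{eq: plan-key-identity} into \eqref{eq: plan-trace} gives $\mathscr{H}(\tilde u)\vert_{\partial D^2}=\tfrac{e^{-i2\theta}}{2}\bigl(H(\mathcal{V}_\frac{1}{2}(u))-i\,\mathcal{V}_\frac{1}{2}(u)\bigr)=\tfrac{e^{-i2\theta}}{2i}\bigl(\mathcal{V}_\frac{1}{2}(u)+iH(\mathcal{V}_\frac{1}{2}(u))\bigr)$, which is the asserted formula.

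The step I expect to be the main obstacle is the careful bookkeeping of the lowest Fourier modes. At the level of the defining integral this is the contribution of the origin, where $\partial_{\overline z}(1/z)=\pi\delta_0$ produces a term proportional to $\mathscr{H}(\tilde u)(0)=\widehat u(1)\cdot\widehat u(1)$; the same constant reappears as the zero mode of the right-hand side, and one must check that the identification of $\mathscr{H}_\frac{1}{2}(u)$ with the trace of $\mathscr{H}(\tilde u)$ accounts for it consistently. The Hardy-space argument above is exactly what pins these low frequencies down, and in particular it forces the cancellations $\widehat{\mathcal{V}_\frac{1}{2}(u)}(0)=\widehat{\mathcal{V}_\frac{1}{2}(u)}(1)=0$ (which can also be verified directly from the Fourier convolution defining $\mathcal{V}_\frac{1}{2}$). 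Finally, for the ``in particular'' statement, the map $T\mapsto\tfrac{e^{-i2\theta}}{2i}\bigl(T+iH(T)\bigr)$ is injective on real distributions with vanishing mean, since it recovers every non-negative Fourier mode of $T$ and the negative ones are determined by conjugation. As $\mathcal{V}_\frac{1}{2}(u)$ is real with vanishing mean, $\mathscr{H}_\frac{1}{2}(u)=0$ is equivalent to $\mathcal{V}_\frac{1}{2}(u)=0$, which by Proposition \ref{prop: first-variation-formula} is equivalent to $u$ being a stationary point of $E_\frac{1}{2}$.
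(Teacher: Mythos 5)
Your proof is correct and follows essentially the same route as the paper: compute the boundary trace of $\mathscr{H}(\tilde u)$ for smooth $u$ via the polar formula and the Dirichlet-to-Neumann identity, use holomorphicity of the Hopf differential to obtain $\lvert(-\Delta)^\frac{1}{2}u\rvert^2-\lvert u'\rvert^2=2H\left(\mathcal{V}_\frac{1}{2}(u)\right)$, extend by continuity to $H^\frac{1}{2}(\mathbb{S}^1)$, and deduce the equivalence with stationarity from the reality of $\mathcal{V}_\frac{1}{2}(u)$. The only cosmetic difference is that the paper phrases the key identity through harmonic conjugates (real and imaginary parts of the holomorphic function $i4z^2\mathscr{H}(\tilde u)$), whereas you derive the same fact by Hardy-space frequency bookkeeping; these are equivalent formulations of one argument.
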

Here $H$ denotes the Hilbert transform, defined through the following property: for any $f\in \mathscr{D}'(\mathbb{S}^1)$, for any $n\in \mathbb{Z}$
\begin{align}
    \widehat{H(f)}(n)=-i\operatorname{sgn}(n)\widehat{f}(n).
\end{align}
\begin{proof}
    We will first prove the result for a smooth map $u$. Denote $\tilde{u}$ the harmonic extension of $u$. Then by \eqref{eq_comm_Hopf_diff_in_polar_coordinates} the restriction of the Hopf differential of $\tilde{u}$ to $\partial D^2$ is given by
    \begin{align}
        \mathscr{H}(\tilde{u})\vert_{\partial D^2}=\frac{\overline{z}^2}{4}\left(\lvert(-\Delta)^\frac{1}{2}u\rvert^2-\lvert\partial_\theta u\rvert^2-2i(-\Delta)^\frac{1}{2}u\cdot u'\right).
    \end{align}
    Since $\tilde{u}$ is harmonic, $\mathscr{H}(\tilde{u})$ is holomorphic, therefore $h(z):=i4z^2\mathscr{H}(\tilde{u})(z)$ is also holomorphic.
    In particular, the harmonic extensions of $2(-\Delta)^\frac{1}{2}u\cdot u'$ and $\lvert(-\Delta)^\frac{1}{2}u\rvert^2-\lvert\partial_\theta u\rvert^2$ are harmonic conjugate (they correspond respectively to the real and to the imaginary part of the holomorphic function $h$), therefore we have
    \begin{align}
        H(2(-\Delta)^\frac{1}{2}u\cdot u')=\lvert(-\Delta)^\frac{1}{2}u\rvert^2-\lvert\partial_\theta u\rvert^2.
    \end{align}
    Since for smooth functions the fractional Hopf differential coincides with the trace of the harmonic extension, there holds
    \begin{align}\label{eq: frac-Hopf-diff-variation}
        \mathscr{H}_\frac{1}{2}(u)=\frac{e^{-i2\theta}}{2}\left(H\left((-\Delta)^\frac{1}{2}u\cdot u'\right)-i(-\Delta)^\frac{1}{2}u\cdot u'\right)=\frac{e^{-i2\theta}}{2i}\left(\mathcal{V}_\frac{1}{2}(u)+iH\left(\mathcal{V}_\frac{1}{2}(u)\right)\right).
    \end{align}
    By continuity, identity \eqref{eq: frac-Hopf-diff-variation} extends to any $u\in H^\frac{1}{2}(\mathbb{S}^1)$.
    Now if $u\in H^\frac{1}{2}(\mathbb{S}^1)$ is a stationary point of $E_\frac{1}{2}$ then $\mathcal{V}_\frac{1}{2}(u)=0$ and clearly $\mathscr{H}_\frac{1}{2}(u)=0$. On the other hand if $\mathscr{H}_\frac{1}{2}(u)=0$ we have $H\left(\mathcal{V}_\frac{1}{2}(u)\right)=-\mathcal{V}_\frac{1}{2}(u)$, i.e. $\mathcal{V}_\frac{1}{2}(u)$ consists only of negative frequencies, but since $\mathcal{V}_\frac{1}{2}(u)$ is real valued, we have $\mathcal{V}_\frac{1}{2}(u)=0$, so that $u$ is a stationary point of $E_\frac{1}{2}$.
\end{proof}
Next we deduce from Proposition \ref{prop: frac-Hopf-diff} a characterization of stationary points of $E_\frac{1}{2}$ in terms of Fourier coefficients. This result was already obtained in \cite{DaLio-Pohozaev} (Proposition 1.2) and \cite{Cabre-Mas}.
\begin{lem}
    Let $u\in H^\frac{1}{2}(\mathbb{S}^1, \mathbb{R}^k)$. Then $u$ is a stationary point of $E_\frac{1}{2}$ if and only if for any $k\in \mathbb{N}$ there holds
    \begin{align}\label{eq: proof-characterization-stationarity-Fourier}
        \sum_{\substack{m,n\in\mathbb{N}\\ m+n=k}}m n\,\widehat{u}(m)\cdot \widehat{u}(n)=0,
    \end{align}
    where $\cdot$ denotes the non-hermitian dot product in $\mathbb{C}^k$.
\end{lem}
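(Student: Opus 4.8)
The plan is to reduce everything to the vanishing of the fractional Hopf differential, using Proposition \ref{prop: frac-Hopf-diff}, and then to read off the Fourier coefficients of $\mathscr{H}_\frac{1}{2}(u)$ explicitly. Since a distribution on $\mathbb{S}^1$ vanishes if and only if all of its Fourier coefficients vanish, it suffices to identify the Fourier coefficients of $\mathscr{H}_\frac{1}{2}(u)$ with the bilinear expression in \eqref{eq: proof-characterization-stationarity-Fourier}. The advantage of working with $\mathscr{H}_\frac{1}{2}$ rather than directly with $\mathcal{V}_\frac{1}{2}(u)=(-\Delta)^\frac{1}{2}u\cdot u'$ is that the holomorphicity of the Hopf differential automatically restricts the relevant frequencies to positive ones, which is precisely the form in which the statement is phrased.

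First I would treat a smooth map $u=\sum_{n\in\mathbb{Z}}\widehat{u}(n)e^{in\theta}$. Its harmonic extension is
\[
    \tilde{u}(z)=\sum_{n\geq 0}\widehat{u}(n)z^n+\sum_{n<0}\widehat{u}(n)\overline{z}^{-n},
\]
so that $\partial_z\tilde{u}=\sum_{m\geq 1}m\,\widehat{u}(m)z^{m-1}$ (the antiholomorphic terms and the $n=0$ term dropping out). Taking the non-hermitian product gives
\[
    \mathscr{H}(\tilde{u})=\partial_z\tilde{u}\cdot\partial_z\tilde{u}=\sum_{m,n\geq 1}mn\,\widehat{u}(m)\cdot\widehat{u}(n)\,z^{m+n-2},
\]
which is manifestly holomorphic; grouping by $k=m+n$ and restricting to $\partial D^2$, where $z=e^{i\theta}$, yields
\[
    \mathscr{H}_\frac{1}{2}(u)=\sum_{k\geq 2}\Bigg(\sum_{\substack{m+n=k\\ m,n\geq 1}}mn\,\widehat{u}(m)\cdot\widehat{u}(n)\Bigg)e^{i(k-2)\theta}.
\]
Here I use that for smooth $u$ the distribution $\mathscr{H}_\frac{1}{2}(u)$ is represented by the trace of $\mathscr{H}(\tilde{u})$ on $\partial D^2$, as established after Definition \ref{def: frac-Hopf-differential}. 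Thus the $(k-2)$-th Fourier coefficient of $\mathscr{H}_\frac{1}{2}(u)$ is exactly the sum in \eqref{eq: proof-characterization-stationarity-Fourier}; the terms with $m=0$ or $n=0$ are annihilated by the factor $mn$, and the small cases give empty sums, so in the smooth case $\mathscr{H}_\frac{1}{2}(u)=0$ is equivalent to \eqref{eq: proof-characterization-stationarity-Fourier} holding for every $k$.

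Next I would pass to general $u\in H^\frac{1}{2}(\mathbb{S}^1)$ by density and continuity. For each fixed frequency $j$ the coefficient-extraction $T\mapsto\widehat{T}(j)$ is continuous on $\mathscr{D}'(\mathbb{S}^1)$, and $\mathscr{H}_\frac{1}{2}\colon H^\frac{1}{2}\to\mathscr{D}'$ is continuous (as noted after Definition \ref{def: frac-Hopf-differential}), so $u\mapsto\widehat{\mathscr{H}_\frac{1}{2}(u)}(j)$ is continuous on $H^\frac{1}{2}$. On the other hand, for fixed $k$ the expression $\sum_{m+n=k,\,m,n\geq 1}mn\,\widehat{u}(m)\cdot\widehat{u}(n)$ is a \emph{finite} sum of products of Fourier coefficients, each of which is continuous on $L^2\supset H^\frac{1}{2}$, hence continuous on $H^\frac{1}{2}$. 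Since both sides agree on the dense subspace $C^\infty(\mathbb{S}^1)$, they agree for all $u\in H^\frac{1}{2}$, and the identification of $\widehat{\mathscr{H}_\frac{1}{2}(u)}(k-2)$ persists. Combining with Proposition \ref{prop: frac-Hopf-diff} gives the chain: $u$ stationary $\iff\mathscr{H}_\frac{1}{2}(u)=0\iff\widehat{\mathscr{H}_\frac{1}{2}(u)}(k-2)=0$ for all $k\geq 2\iff$ \eqref{eq: proof-characterization-stationarity-Fourier}.

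The algebra is routine once the harmonic extension is written down; the only point requiring genuine care is the passage from smooth maps to $H^\frac{1}{2}$, i.e. verifying that both the Fourier-coefficient map and the finite bilinear sum are continuous in the $H^\frac{1}{2}$ topology so that the identity extends from the dense set of smooth functions. This is where I expect the only (and rather mild) obstacle to sit, and it is dispatched by the continuity of $\mathscr{H}_\frac{1}{2}$ together with the finiteness of the sum for each fixed $k$.
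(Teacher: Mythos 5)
Your proof is correct, and it shares the paper's overall skeleton --- reduce stationarity to $\mathscr{H}_\frac{1}{2}(u)=0$ via Proposition \ref{prop: frac-Hopf-diff}, identify the Fourier coefficients of $\mathscr{H}_\frac{1}{2}(u)$ for smooth $u$, then extend to $H^\frac{1}{2}$ by density and continuity --- but your identification of the coefficients takes a genuinely different route. The paper stays on the circle: it splits $u$ into positive- and non-positive-frequency parts $u_\pm$, computes $\mathcal{V}_\frac{1}{2}(u)=i(-\Delta)^\frac{1}{2}u_+\cdot(-\Delta)^\frac{1}{2}u_+-i(-\Delta)^\frac{1}{2}u_-\cdot(-\Delta)^\frac{1}{2}u_-$, and feeds this into the Hilbert-transform relation \eqref{eq: frac-Hopf-diff-variation-statement} to obtain $\mathscr{H}_\frac{1}{2}(u)=e^{-i2\theta}\,(-\Delta)^\frac{1}{2}u_+\cdot(-\Delta)^\frac{1}{2}u_+$, whose coefficients are then computed as a convolution. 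You instead work in the disc: you expand $\tilde u$ as a power series, note that $\partial_z$ annihilates the antiholomorphic part, and read the trace of $\mathscr{H}(\tilde u)$ off the series, invoking the fact (recorded after Definition \ref{def: frac-Hopf-differential}) that for smooth $u$ the distribution $\mathscr{H}_\frac{1}{2}(u)$ is represented by that trace. So you use only the equivalence ``stationary $\iff\mathscr{H}_\frac{1}{2}(u)=0$'' from Proposition \ref{prop: frac-Hopf-diff} and none of its Hilbert-transform formula, while the paper uses the formula and never writes down the extension; the two intermediate identities agree, since your boundary series $\sum_{k\geq 2}\bigl(\sum_{m+n=k}mn\,\widehat{u}(m)\cdot\widehat{u}(n)\bigr)e^{i(k-2)\theta}$ is exactly $e^{-i2\theta}(-\Delta)^\frac{1}{2}u_+\cdot(-\Delta)^\frac{1}{2}u_+$. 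What your route buys: the restriction to nonnegative frequencies is transparent (it is just holomorphicity of $\partial_z\tilde u$), and your bookkeeping is internally consistent where the paper's displayed claims carry stray factors (a $2e^{-i2\theta}$ versus $e^{-i2\theta}$, and the sign in the exponential multiplying $\mathscr{H}_\frac{1}{2}(u)$ inside $\mathscr{F}$). What the paper's route buys: everything is expressed intrinsically through $\mathcal{V}_\frac{1}{2}(u)$ on $\mathbb{S}^1$, with no recourse to the harmonic extension. One point you should make explicit rather than leave implicit in ``the identification persists'': for the ``if'' direction you also need the negative-index Fourier coefficients of $\mathscr{H}_\frac{1}{2}(u)$ to vanish for general $u\in H^\frac{1}{2}$, not just for smooth $u$; this does follow from your continuity argument (they are limits of zeros along the approximating sequence), but it is a separate instance of that argument and deserves its own sentence.
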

\begin{proof}
    First we claim that for any $u\in C^\infty(\mathbb{S}^1)$ we have\footnote{We will denote alternatively by $\widehat{f}(n)$ or $\mathscr{F}(f)(n)$ the $n$-th Fourier coefficient of a distribution $f$, defined as
    \begin{align*}
        \widehat{f}(n):=\frac{1}{2\pi}\langle f, e^{-inx}\rangle.
    \end{align*}}
    \begin{align}\label{eq: proof-characterization-stationarity-Fourier}
        \mathscr{F}\left(2e^{-i2\theta}\mathscr{H}_\frac{1}{2}(u)\right)(k)=\sum_{\substack{m,n\in\mathbb{N}\\ m+n=k}}m n\,\widehat{u}(m)\cdot \widehat{u}(n),
    \end{align}
    while the negative Fourier coefficients are zero.\\
    Denote by $u_+$, $u_-$ the positive- and non positive-frequencies part of $u$ respectively. 
    Note that
    \begin{align}
        \mathcal{V}_\frac{1}{2}(u)=&(-\Delta)^\frac{1}{2}(u_++u_-)\cdot i(-\Delta)^\frac{1}{2}(u_+-u_-)\\
        \nonumber
        =&i(-\Delta)^\frac{1}{2}u_+\cdot(-\Delta)^\frac{1}{2}u_+-i(-\Delta)^\frac{1}{2}u_-\cdot(-\Delta)^\frac{1}{2}u_-
    \end{align}
Notice that the first term on the right hand side consists only of positive frequencies, while the second consists only of negative frequencies.
Thus
\begin{align}
    \mathcal{V}_\frac{1}{2}(u)+iH\left(\mathcal{V}_\frac{1}{2}(u)\right)=2i(-\Delta)^\frac{1}{2}u_+\cdot(-\Delta)^\frac{1}{2}u_+
\end{align}
    and by Proposition \ref{prop: frac-Hopf-diff}
    \begin{align}
        \mathscr{H}_\frac{1}{2}(u)=e^{-i2\theta}(-\Delta)^\frac{1}{2}u_+\cdot(-\Delta)^\frac{1}{2}u_+.
    \end{align}
    Since $u_+$ consists only of positive frequencies, for any $k\in \mathbb{N}$ we have
    \begin{align}
        \mathscr{F}\left(e^{-i2\theta}\mathscr{H}_\frac{1}{2}(u)\right)(k)=&\sum_{n\in \mathbb{Z}}\widehat{(-\Delta)^\frac{1}{2}u_+}(k-n)\cdot\widehat{(-\Delta)^\frac{1}{2}u_+}(n)\\
        \nonumber
        =&\sum_{\substack{m,n\in\mathbb{N}\\ m+n=k}}m n\,\widehat{u}(m)\cdot \widehat{u}(n)
    \end{align}
    while the the negative coefficients are zero. This conclude the proof of the claim. By approximation, \eqref{eq: proof-characterization-stationarity-Fourier} holds for any $u\in H^\frac{1}{2}(\mathbb{S}^1)$.
    Since $u\in H^\frac{1}{2}(\mathbb{S}^1)$ is a stationary point of $E_\frac{1}{2}$ if and only if $\mathscr{H}_\frac{1}{2}(u)=0$, the result follows.
\end{proof}
For $k=2$, the previous result yields a simple relation for the first Fourier coefficient, which is reminiscent of the balancing condition \eqref{eq: balancing condition} for harmonic functions. The following result already appeared in \cite{dalio2016pohozaevtype}
\begin{cor}
    For any stationary point $u\in H^\frac{1}{2}(\mathbb{S}^1)$ of $E_\frac{1}{2}$ there holds
    \begin{align}\label{eq: balancing condition lemma}
        \left\lvert\int_0^{2\pi}u(\theta)\cos(\theta)d\theta\right\rvert^2=\left\lvert\int_0^{2\pi}u(\theta)\sin(\theta)d\theta\right\rvert^2
    \end{align}
    and
    \begin{align}\label{eq: balancing condition imaginary}
        \left(\int_0^{2\pi}u(\theta)\cos(\theta)d\theta\right)\cdot\left(\int_0^{2\pi}u(\theta)\cos(\theta)d\theta\right)=0.
    \end{align}
\end{cor}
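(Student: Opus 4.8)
The plan is to specialize the Fourier characterization of stationarity established in the previous lemma to the single value $k=2$, and then to translate the resulting algebraic identity on the first Fourier coefficient $\widehat u(1)$ into the two stated relations. Throughout I write $c:=\int_0^{2\pi}u(\theta)\cos\theta\,d\theta$ and $s:=\int_0^{2\pi}u(\theta)\sin\theta\,d\theta$, which are vectors in $\mathbb{R}^k$ since $u$ is $\mathbb{R}^k$-valued.

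First I would write out the condition $\sum_{m+n=k}mn\,\widehat u(m)\cdot\widehat u(n)=0$ for $k=2$. Among the pairs $(m,n)$ with $m+n=2$, every term in which $m=0$ or $n=0$ is killed by the prefactor $mn$, so the convention on whether $0\in\mathbb{N}$ plays no role and the sum collapses to the single term $m=n=1$. This leaves the scalar identity
\[
\widehat u(1)\cdot\widehat u(1)=0,
\]
where $\cdot$ is the non-hermitian (bilinear) dot product on $\mathbb{C}^k$.

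Next I would insert $\widehat u(1)=\frac{1}{2\pi}\int_0^{2\pi}u(\theta)e^{-i\theta}\,d\theta=\frac{1}{2\pi}(c-is)$ and expand the bilinear form:
\[
(c-is)\cdot(c-is)=c\cdot c-s\cdot s-2i\,c\cdot s.
\]
Because $c$ and $s$ are real vectors, $c\cdot c$, $s\cdot s$ and $c\cdot s$ are all real, so the vanishing of this complex number is equivalent to the simultaneous vanishing of its real and imaginary parts. The real part gives $|c|^2=|s|^2$, which is \eqref{eq: balancing condition lemma}, and the imaginary part gives $c\cdot s=0$, which is \eqref{eq: balancing condition imaginary}.

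I expect no genuine analytic obstacle: the entire content is packaged in the previous lemma, and the argument is a one-line specialization followed by separating real and imaginary parts. The only point deserving a little care is precisely this last separation --- it is clean only because $u$ takes values in $\mathbb{R}^k$, so that $c,s$ are real and the real/imaginary decomposition of $(c-is)\cdot(c-is)$ matches the two scalar identities; for a genuinely $\mathbb{C}^k$-valued map the two relations would not decouple in this way.
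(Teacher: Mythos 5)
Your proposal is correct and follows essentially the same route as the paper's own proof: specialize the Fourier characterization to $k=2$ (where, as you note, the prefactor $mn$ kills all terms except $m=n=1$), obtain $\widehat u(1)\cdot\widehat u(1)=0$, write $\widehat u(1)$ in terms of the cosine and sine integrals, and separate real and imaginary parts of the bilinear square. One remark: the identity your argument (and the paper's) actually yields for the imaginary part is $\left(\int_0^{2\pi}u(\theta)\cos(\theta)d\theta\right)\cdot\left(\int_0^{2\pi}u(\theta)\sin(\theta)d\theta\right)=0$, so the printed form of \eqref{eq: balancing condition imaginary}, with $\cos$ appearing twice, is a typo in the statement rather than a gap in your proof.
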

\begin{proof}
    Let $u\in H^\frac{1}{2}(\partial D^2)$ be a stationary point of $E_\frac{1}{2}$. By \eqref{eq: proof-characterization-stationarity-Fourier} with $k=2$ we have
    \begin{align}
        \widehat{u}(1)\cdot\widehat{u}(1)=0,
    \end{align}
    where $\cdot$ denotes the non-hermitian dot product in $\mathbb{C}^k$. Therefore we have
    \begin{align}\label{eq: derivation balancing condition}
        0=\sum_{j=1}^k&\left(\int_{0}^{2\pi}u^j(\theta)\cos(\theta)d\theta+i\int_{0}^{2\pi}u^j(\theta)\sin(\theta)d\theta\right)^2\\
        \nonumber
        =\sum_{j=1}^k&\left(\left(\int_{0}^{2\pi}u^j(\theta)\cos(\theta)d\theta\right)^2-\left(\int_{0}^{2\pi}u^j(\theta)\sin(\theta)d\theta\right)^2\right.\\
        \nonumber
        &\left.+2i\left(\int_{0}^{2\pi}u^j(\theta)\cos(\theta)d\theta\right)\left(\int_{0}^{2\pi}u^j(\theta)\sin(\theta)d\theta\right)\right).
    \end{align}
    Considering separately the real and imaginary parts of \eqref{eq: derivation balancing condition} we obtain respectively \eqref{eq: balancing condition lemma} and \eqref{eq: balancing condition imaginary}.
\end{proof}
Finally, with the help of the fractional Hopf differential we prove the following theorem, which describes the relationship between conformal harmonic maps and stationary points of $E_\frac{1}{2}$.
\begin{thm}
    Let $u\in H^\frac{1}{2}(\mathbb{S}^1)$ and let $\tilde{u}\in H^1(D^2)$ denote its harmonic extension. Then the following are equivalent.
    \begin{enumerate}
        \item $u$ is a stationary point of $E_\frac{1}{2}$, i.e. $\mathscr{H}_\frac{1}{2}(u)=0$,
        \item $\tilde{u}$ is weakly conformal, i.e. $\mathscr{H}(\tilde{u})=0$.
    \end{enumerate}
\end{thm}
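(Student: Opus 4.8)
The plan is to prove the two implications separately. Since Proposition~\ref{prop: frac-Hopf-diff} already identifies stationarity of $u$ with the vanishing of $\mathscr{H}_\frac{1}{2}(u)$, the only genuinely new content is the equivalence $\mathscr{H}_\frac{1}{2}(u)=0\Leftrightarrow\mathscr{H}(\tilde u)=0$. The implication $(2)\Rightarrow(1)$ is immediate from Definition~\ref{def: frac-Hopf-differential}: if $\tilde u$ is weakly conformal then $\mathscr{H}(\tilde u)\equiv 0$ as an element of $L^1(D^2)$, hence $\mathscr{H}_\frac{1}{2}(u)[\varphi]=2\int_{D^2}\frac{1}{z}\mathscr{H}(\tilde u)\frac{\partial\tilde\varphi}{\partial\bar z}\,dx^2=0$ for every $\varphi\in C^\infty(\mathbb{S}^1)$, so $u$ is a stationary point of $E_\frac{1}{2}$.

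For $(1)\Rightarrow(2)$ I would exploit the holomorphicity of the Hopf differential of a harmonic map together with the Fourier characterization of stationarity proved above. Writing $u=\sum_{n\in\mathbb{Z}}\hat u(n)e^{in\theta}$, the harmonic extension is $\tilde u=\sum_{n\geq 0}\hat u(n)z^n+\sum_{n\geq 1}\hat u(-n)\bar z^n$, so that $\partial_z\tilde u=\sum_{m\geq 1}m\,\hat u(m)z^{m-1}$ is holomorphic on $D^2$. Forming the Cauchy product with the non-hermitian dot product gives the Taylor expansion $\mathscr{H}(\tilde u)(z)=\sum_{j\geq 0}c_j z^j$ with $c_j=\sum_{p+q=j+2,\,p,q\geq 1}pq\,\hat u(p)\cdot\hat u(q)$. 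These coefficients are precisely the sums appearing in the Fourier characterization of stationary points; hence, if $u$ is stationary, every such sum vanishes, all the $c_j$ are zero, and $\mathscr{H}(\tilde u)\equiv 0$, which is $(2)$.

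The point requiring the most care is the constant Taylor coefficient $c_0=\hat u(1)\cdot\hat u(1)$. Since $\mathscr{H}_\frac{1}{2}(u)$ is assembled from $\frac{1}{z}\mathscr{H}(\tilde u)$, the simple pole of $1/z$ at the origin makes the zero-frequency behaviour delicate, and the vanishing of $c_0$ corresponds exactly to the $k=2$ instance of the Fourier characterization, equivalently to the balancing condition \eqref{eq: balancing condition lemma}; this is the piece of conformality information that is not detectable from the nonzero frequencies alone, which is why routing the argument through the stationarity sums (rather than testing $\mathscr{H}_\frac{1}{2}(u)$ against individual exponentials) is the cleanest path. Because $\tilde u$ is only of class $H^1$, I would carry out the coefficient computation on compact subdiscs $\{|z|\leq\rho\}$, where $\tilde u$ is smooth and the series converge uniformly, and then let $\rho\to 1$; the manipulations are routine once the algebraic identity for the $c_j$ is in place, and the continuity of both $u\mapsto\mathscr{H}_\frac{1}{2}(u)$ and $u\mapsto\mathscr{H}(\tilde u)$ permits a preliminary reduction to smooth $u$ if desired.
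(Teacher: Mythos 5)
Your proposal is correct, and for the implication $(1)\Rightarrow(2)$ it takes a genuinely more direct route than the paper. The paper's proof goes through interior circles: it sets $u_r(\theta)=\tilde u(r,\theta)$, uses the Fourier-coefficient identity to show that $\mathscr{H}_\frac{1}{2}(u)=0$ if and only if $\mathscr{H}_\frac{1}{2}(u_r)=0$, then invokes the smooth theory for $u_r$ (for smooth boundary data the fractional Hopf differential is the boundary trace of the holomorphic function $\mathscr{H}(\widetilde{u_r})$, and a holomorphic function smooth up to the boundary with vanishing trace is identically zero), and finally removes the restriction via the scaling relation $\mathscr{H}(\widetilde{u_r})(x)=r^2\mathscr{H}(\tilde u)(rx)$ valid for every $r<1$. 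You instead expand $\partial_z\tilde u=\sum_{m\geq 1}m\,\widehat u(m)z^{m-1}$ and identify the Taylor coefficients $c_j$ of the holomorphic function $\mathscr{H}(\tilde u)$ with the sums appearing in the Fourier characterization of stationarity; once those sums vanish, $\mathscr{H}(\tilde u)\equiv 0$ on all of $D^2$ outright. Both arguments pivot on the same computational kernel, namely that the Fourier coefficients of $e^{-2i\theta}\mathscr{H}_\frac{1}{2}(u)$ are $\sum_{m+n=k}mn\,\widehat u(m)\cdot\widehat u(n)$, but your version eliminates the interior-circle and trace-vanishing detour entirely, which makes it shorter and more self-contained; what the paper's formulation emphasizes in exchange is the interpretation of $\mathscr{H}_\frac{1}{2}$ as the continuous extension of the trace operator. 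Two small remarks. First, your final limiting step $\rho\to 1$ (and the suggested preliminary reduction to smooth $u$) is unnecessary: weak conformality of $\tilde u$ is a statement almost everywhere on the open disc, $\mathscr{H}(\tilde u)$ is holomorphic there since $\partial_{\overline z}\partial_z\tilde u=\frac{1}{4}\Delta\tilde u=0$, and the vanishing of all Taylor coefficients at the origin already forces $\mathscr{H}(\tilde u)\equiv 0$. Second, the delicacy you attribute to $c_0$ and to the pole of $1/z$ is not a real obstruction: $c_0=\widehat u(1)\cdot\widehat u(1)$ is captured by the $k=2$ stationarity sum exactly as every $c_j$ is captured by the $k=j+2$ one, each of which amounts to testing $\mathscr{H}_\frac{1}{2}(u)$ against a single smooth exponential, so the zero-frequency coefficient requires no special treatment.
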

\begin{proof}
    If $\tilde{u}$ is conformal, then $\mathscr{H}(\tilde{u})=0$ and thus by Definition \ref{def: frac-Hopf-differential} we have $\mathscr{H}_\frac{1}{2}(u)=0$.\\
    Assume next that $\mathscr{H}_\frac{1}{2}(\tilde{u})=0$. We would like to say that since $\mathscr{H}(\tilde{u})$ is holomorphic, if its trace on $\partial D^2$ vanishes then $\mathscr{H}(\tilde{u})$ vanishes as well. However $\mathscr{H}_\frac{1}{2}(u)$ coincides with the trace of $\mathscr{H}(\tilde{u})$ only if $u$ is sufficiently smooth, so we need to be more careful. Recall that
    \begin{align}
        \tilde{u}(r,\theta)=\sum_{n\in \mathbb{Z}}r^{\lvert n\rvert}e^{in\theta}\widehat{u}(n).
    \end{align}
    For any $r\in (0,1), \theta\in [0,2\pi)$ set $u_r(\theta)=u(r,\theta)$. Then for any $n\in \mathbb{Z}$ we have $\widehat{u_r}(n)=r^{\lvert n\rvert}\hat{u}(n)$. In particular $\eqref{eq: proof-characterization-stationarity-Fourier}$ implies that for any $n\in \mathbb{Z}$
    \begin{align}
        \mathscr{F}\left(e^{-i2\theta}\mathscr{H}_\frac{1}{2}(u)\right)(n)=r^{\lvert n\rvert}\mathscr{F}\left(2e^{-i2\theta}\mathscr{H}_\frac{1}{2}(u_r)\right)(n)
    \end{align}
    so that $\mathscr{H}_\frac{1}{2}(u)=0$ if and only if $\mathscr{H}_\frac{1}{2}(u_r)=0$.
    Denote $\widetilde{u_r}$ the harmonic extension of $u_r$ in $D^2$. Then $\mathscr{H}(\widetilde{u_r})$ is an holomorphic function on $D^2$, and since $u_r$ is smooth up to the boundary and $\mathscr{H}_\frac{1}{2}(u_r)=0$ we conclude that $\mathscr{H}(\widetilde{u_r})=0$.
    Finally we notice that by uniqueness of the harmonic extension, $\widetilde{u_r}(s,\theta)=\tilde{u}(rs,\theta)$. Therefore $\mathscr{H}(\widetilde{u_r})(x)=r^2\mathscr{H}(\tilde{u})(rx)$ in $D^2$. As the argument above is valid for any $r\in (0,1)$, we conclude that $\mathscr{H}(\tilde{u})=0$, i.e. $\tilde{u}$ is weakly conformal.    
\end{proof}
\begin{rem}
    As noted in the introduction, the fact that the harmonic extension of a half harmonic map is conformal has been previously established (see \cite{DALIO-compactness}, \cite{Millot-Sire} and \cite{DaLio-Martinazzi-Riviere}). The proof of this fact, which is also based on the holomorphicity of the Hopf differential, relies on the smoothness of half harmonic maps (see Theorem 1.8 in \cite{Sub-criticality}). By introducing the fractional Hopf differential, this argument is extended to functions $u$ in $H^\frac{1}{2}(\mathbb{S}^1)$.
    Loosely speaking, the result above says that the map $u$ has a conformal harmonic extension not only if $P_M^T (-\Delta)^\frac{1}{2}u=0$ (where $P_M^T$ denotes the orthogonal projection on the tangent space of the target manifold $M$), but also when the less restrictive condition $(-\Delta)^\frac{1}{2}u\cdot u'=0$ is met. Moreover, this latter condition is shown to be necessary.
\end{rem}

\section{A Noether theorem for the half Dirichlet energy}
Whenever a Lagrangian (or more precisely its energy density) is invariant under a smooth family of variations in the domain, Noether's theorem \cite{Noether1918} allows to derive a conservation law for stationary points of the Lagrangian.
With the help of the characterization of stationary points of the half Dirichlet energy obtained above we will derive a Noether theorem for its symmetries.\\
Notice that by Lemma \ref{lem: estimate div_1/commutator}, the distribution $D_\frac{1}{2}(a,b)$ is well defined (by continuous extension) even when $a\in H^{-\frac{1}{2}}$, $b\in H^\frac{1}{2}$.\\
\begin{thm}\label{prop_final_version_Noether_thm_vardom_12}
Let $X$ be a smooth vector field on $\mathbb{S}^1$ and assume that for any $v\in C^\infty(\mathbb{S}^1,\mathbb{R}^m)$ its flow $\phi_t$ satisfies
\begin{equation}\label{eq_diffeo_is_conformal12_2}
v(\phi_t(x))\cdot(-\Delta)^\frac{1}{2}(v\circ\phi_t)(x)=v(\phi_t(x))\cdot(-\Delta)^\frac{1}{2}v(\phi_t(x))\phi_t'(x)
\end{equation}
for $x\in \mathbb{S}^1$ and $t$ in a neighbourhood of the origin.
Assume that $u\in H^\frac{1}{2}(\mathbb{S}^1, \mathbb{R}^m)$ is a stationary point of $E_\frac{1}{2}$.
Then we have
\begin{equation}\label{eq_prop_result_Noether_thm_vardom_12}
\frac{d}{dx}\left((-\Delta)^\frac{1}{2}u \cdot uX\right)=D_\frac{1}{2}(u'X,u).
\end{equation}
\end{thm}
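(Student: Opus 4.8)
The plan is to establish the identity first for smooth maps, by differentiating the symmetry hypothesis \eqref{eq_diffeo_is_conformal12_2} in $t$, and then to pass to an arbitrary stationary $u\in H^\frac{1}{2}(\mathbb{S}^1,\mathbb{R}^m)$ by density, exploiting the continuity of each term as a distribution-valued map on $H^\frac{1}{2}(\mathbb{S}^1)$. The role of the hypothesis on $X$ will be confined to producing one algebraic identity, and the role of stationarity to cancelling the one remaining term.

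First I would differentiate \eqref{eq_diffeo_is_conformal12_2} at $t=0$. For smooth $v$ both sides are smooth in $(t,x)$ and $(-\Delta)^\frac{1}{2}$ commutes with $\partial_t$ on smooth families, so using $\partial_t\vert_{t=0}(v\circ\phi_t)=v'X$, $\partial_t\vert_{t=0}\phi_t'=X'$ and $\phi_0'=1$, and cancelling the common term $(v'X)\cdot(-\Delta)^\frac{1}{2}v=\big(v'\cdot(-\Delta)^\frac{1}{2}v\big)X$ from the two sides, I obtain
\[
    v\cdot(-\Delta)^\frac{1}{2}(v'X)=\big(v\cdot(-\Delta)^\frac{1}{2}v'\big)X+\big(v\cdot(-\Delta)^\frac{1}{2}v\big)X'
\]
for every $v\in C^\infty(\mathbb{S}^1,\mathbb{R}^m)$. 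This is the infinitesimal form of the conformal-type invariance and is the only place where the assumption on $X$ is used.

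Next, for smooth $u$ I would expand both sides of \eqref{eq_prop_result_Noether_thm_vardom_12}. Using $\frac{d}{dx}(-\Delta)^\frac{1}{2}=(-\Delta)^\frac{1}{2}\frac{d}{dx}$ gives
\[
    \frac{d}{dx}\big((-\Delta)^\frac{1}{2}u\cdot u\,X\big)=\big((-\Delta)^\frac{1}{2}u'\cdot u+(-\Delta)^\frac{1}{2}u\cdot u'\big)X+\big((-\Delta)^\frac{1}{2}u\cdot u\big)X',
\]
while $D_\frac{1}{2}(u'X,u)=(-\Delta)^\frac{1}{2}(u'X)\cdot u-\big((-\Delta)^\frac{1}{2}u\cdot u'\big)X$. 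Substituting the identity above (with $v=u$) into the first summand of $D_\frac{1}{2}(u'X,u)$ and subtracting, every term cancels except one copy of $(-\Delta)^\frac{1}{2}u\cdot u'\,X$, so that for smooth $u$
\[
    \frac{d}{dx}\big((-\Delta)^\frac{1}{2}u\cdot u\,X\big)-D_\frac{1}{2}(u'X,u)=2\,\mathcal{V}_\frac{1}{2}(u)\,X.
\]
For a smooth stationary map this already yields the claim since $\mathcal{V}_\frac{1}{2}(u)=0$; the substance of the theorem is the extension of this identity to $H^\frac{1}{2}$.

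To extend it I would take $u_n\in C^\infty(\mathbb{S}^1,\mathbb{R}^m)$ with $u_n\to u$ in $H^\frac{1}{2}$ and pass to the limit in $\mathscr{D}'(\mathbb{S}^1)$. The term $\mathcal{V}_\frac{1}{2}(u_n)X$ converges to $\mathcal{V}_\frac{1}{2}(u)X=0$ by Proposition \ref{prop: first-variation-formula} and stationarity, and $D_\frac{1}{2}(u_n'X,u_n)$ converges since $u_n'X\to u'X$ in $H^{-\frac{1}{2}}$, $u_n\to u$ in $H^\frac{1}{2}$, and $D_\frac{1}{2}$ extends continuously to $H^{-\frac{1}{2}}\times H^\frac{1}{2}$ by Lemma \ref{lem: estimate div_1/commutator} (the remark preceding the theorem). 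The main obstacle is the term $\frac{d}{dx}\big((-\Delta)^\frac{1}{2}u_n\cdot u_n X\big)$, because the product $(-\Delta)^\frac{1}{2}u\cdot u$ pairs an $H^{-\frac{1}{2}}$ factor with an $H^\frac{1}{2}$ factor and is not a priori a distribution depending continuously on $u$. I would resolve this as in Section 2 by writing
\[
    (-\Delta)^\frac{1}{2}u\cdot u=\big\lvert(-\Delta)^\frac{1}{4}u\big\rvert^2+D_\frac{1}{4}\big((-\Delta)^\frac{1}{4}u,\,u\big),
\]
where the first summand is continuous from $H^\frac{1}{2}$ into $L^1$ and the commutator term is controlled by Lemma \ref{lem: estimate for div}; hence $u\mapsto(-\Delta)^\frac{1}{2}u\cdot u$ is continuous $H^\frac{1}{2}\to\mathscr{D}'$, and composing with multiplication by the smooth function $X$ and with distributional differentiation (both continuous on $\mathscr{D}'$) gives convergence of the last term. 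Passing to the limit then yields \eqref{eq_prop_result_Noether_thm_vardom_12} for every stationary $u\in H^\frac{1}{2}(\mathbb{S}^1,\mathbb{R}^m)$.
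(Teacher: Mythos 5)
Your proof is correct and follows essentially the same route as the paper: linearizing the symmetry hypothesis at $t=0$ gives precisely the paper's identity \eqref{eq: formula-for Pohozaev}, your smooth-case identity $\frac{d}{dx}\big((-\Delta)^\frac{1}{2}u\cdot u\,X\big)-D_\frac{1}{2}(u'X,u)=2\,\mathcal{V}_\frac{1}{2}(u)\,X$ is exactly the paper's intermediate claim, and your density argument uses the same ingredients (Proposition \ref{prop: first-variation-formula} together with stationarity for the $\mathcal{V}_\frac{1}{2}$ term, and Lemma \ref{lem: estimate div_1/commutator} for the $D_\frac{1}{2}$ term). The one point where you are more explicit than the paper is the convergence of $\frac{d}{dx}\big((-\Delta)^\frac{1}{2}u_n\cdot u_n X\big)$, which you justify via the decomposition $(-\Delta)^\frac{1}{2}u\cdot u=\lvert(-\Delta)^\frac{1}{4}u\rvert^2+D_\frac{1}{4}\big((-\Delta)^\frac{1}{4}u,u\big)$ and Lemma \ref{lem: estimate for div}, whereas the paper compresses this step into a bare citation of Lemma \ref{lem: estimate div_1/commutator}; your treatment soundly fills in that detail.
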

\begin{proof}
First we will prove that for any $u\in C^\infty(\mathbb{S}^1)$ there holds
\begin{align}
    \frac{d}{dx}\left(X(-\Delta)^\frac{1}{2}u\cdot u\right)=2(-\Delta)^\frac{1}{2}u\cdot u' X+
    D_\frac{1}{2}(u'X,u).
\end{align}
Observe that by Taylor's Theorem
\begin{equation}\label{eq_approx_formula_vardom}
u\circ\phi_t=u+tu'X+o_{C^1}(t)
\end{equation}
as $t\to 0$ (in the following, the $o$ notation will always refer to $t\to 0$).
Thus we have
\begin{equation}\label{eq_fraclap_comp_diffeo_step14_1}
(-\Delta)^\frac{1}{2} (u\circ\phi_t)=(-\Delta)^\frac{1}{2}u+t(-\Delta)^\frac{1}{2}(u'X)+o_{L^2}(t).
\end{equation}
Observe that since $u\in C^\infty(\mathbb{S}^1)$, $(-\Delta)^\frac{1}{2}u\in C^\infty(\mathbb{S}^1)$. Thus, by Taylor's Theorem,
\begin{equation}\label{eq_fraclap_comp_diffeo_step14_3}
(-\Delta)^\frac{1}{2}u(\phi_t(x))=(-\Delta)^\frac{1}{2}u(x)+t(-\Delta)^\frac{1}{2}u'(x)\ X(x)+o_{L^\infty}(t).
\end{equation}
Therefore, by (\ref{eq_fraclap_comp_diffeo_step14_1}) and  (\ref{eq_fraclap_comp_diffeo_step14_3}), for $t\in \mathbb{R}$, $x\in \mathbb{S}^1$
\begin{align}\label{eq: for derivative}
    (-\Delta)^\frac{1}{2}(u\circ\phi_t)(x)=(-\Delta)^\frac{1}{2}u(\phi_t(x))+t\left((-\Delta)^\frac{1}{2}(u'X)-(-\Delta)^\frac{1}{2}u'X\right)+o_{L^2}(t).
\end{align}
Recall that by assumption \eqref{eq_diffeo_is_conformal12_2} for any $x\in \mathbb{S}^1$ we have
\begin{equation}\label{eq_diffeo_is_conformal14}
(-\Delta)^\frac{1}{2}(u\circ\phi_t)\cdot u\circ\phi_t=((-\Delta)^\frac{1}{2}u\cdot u)(\phi_t(x))\phi_t'(x).
\end{equation}
If we derive (\ref{eq_diffeo_is_conformal14}) with respect to $t$ and evaluate in $t=0$ we obtain
\begin{equation}
\begin{split}\label{eq_derivative_diffeo_is_conformal14}
\frac{d}{dt}\bigg\vert_{t=0}(-\Delta)^\frac{1}{2}(u\circ\phi_t)\cdot u\circ\phi_t=&\frac{d}{dt}\bigg\vert_{t=0}((-\Delta)^\frac{1}{2}u \cdot u)(\phi_t(x))+(-\Delta)^\frac{1}{2}u \cdot u X'(x)\\
=&\frac{d}{dx}((-\Delta)^\frac{1}{2}u\cdot u)X(x)+(-\Delta)^\frac{1}{2}u\cdot u X'(x)\\
=&\frac{d}{dx}\left((-\Delta)^\frac{1}{2}u\cdot u X\right)(x).
\end{split}
\end{equation}
On the other hand, it follows from \eqref{eq: for derivative} that for almost any $x\in \mathbb{S}^1$
\begin{align}\label{eq_derivative_of_f14}
    \frac{d}{dt}\bigg\vert_{t=0}(-\Delta)^\frac{1}{2}(u\circ\phi_t)\cdot u\circ\phi_t=&\frac{d}{dx}\left(u\cdot (-\Delta)^\frac{1}{2}u\right)X\\
    \nonumber
    &+u\cdot\left((-\Delta)^\frac{1}{2}(u'X)-(-\Delta)^\frac{1}{2}u' X\right).
\end{align}
Comparing \eqref{eq_derivative_diffeo_is_conformal14} and \eqref{eq_derivative_of_f14} we obtain
\begin{align}\label{eq: formula-for Pohozaev}
    u\cdot\frac{d}{dx}\left(X(-\Delta)^\frac{1}{2}u\right)=u\cdot(-\Delta)^\frac{1}{2}(u'X),
\end{align}
which can be rewritten as
\begin{align}
    \frac{d}{dx}\left(X(-\Delta)^\frac{1}{2}u\cdot u\right)=2(-\Delta)^\frac{1}{2}u\cdot u' X+
    D_\frac{1}{2}(u'X,u).
\end{align}
This conclude the proof of the claim.\\
Next suppose that $u\in H^\frac{1}{2}$ is a stationary point of $E_\frac{1}{2}$. Then there exists a sequence $\{u_n\}_{n\in \mathbb{N}}$ of smooth functions converging to $u$ in $H^\frac{1}{2}$.
By Proposition \ref{prop: first-variation-formula} we have
\begin{align}
    \lim_{n\to \infty}(-\Delta)^\frac{1}{2}u_n u_n' X= \lim_{n\to \infty}\mathcal{V}_\frac{1}{2}(u_n)X=\mathcal{V}_\frac{1}{2}(u)X=0,
\end{align}
where the limit are taken in the space of distributions.
On the other hand by Lemma \ref{lem: estimate div_1/commutator}
\begin{align}
    &\frac{d}{dx}\left((-\Delta)^\frac{1}{2}u uX\right)-D_\frac{1}{2}(u'X,u)\\
    \nonumber
    =&\lim_{n\to\infty}\left(\frac{d}{dx}((-\Delta)^\frac{1}{2}u_n u_nX)+D_\frac{1}{2}(u_n'X,u_n)\right)\\
    \nonumber
    =&\lim_{n\to\infty}2(-\Delta)^\frac{1}{2}u_n u_n X=0.
\end{align}
This completes the proof of the theorem.
\end{proof}

We remark that it would seem more natural to look for Noether theorems for $E_\frac{1}{2}$ in terms of the energy density $\left\lvert(-\Delta)^\frac{1}{4}u\right\rvert^2$. In fact one can obtain with similar methods the following theorem (see \cite{gaia-MT}, Theorem 4.5) for a family of energies which includes the half Dirichlet energy. However, it seems that it is more difficult to find vector fields $X$ for which $\lvert(-\Delta)^\frac{1}{4}u\rvert^2$ is invariant and for which Noether theorems delivers nontrivial results.\\
Let $m\in\mathbb{N}_{>0}$. Let $L\in C^2(\mathbb{R}^m\times\mathbb{R}^m, \mathbb{R})$ so that for any $x,p\in\mathbb{R}^m$
\begin{equation}\label{eq_well_def_energy_vardom}
\lvert L(x,p)\rvert\leq C(1+\lvert p\rvert^2)
\end{equation}
for some $C>0$. For any $u\in H^1(\mathbb{S}^1,\mathbb{R}^m)$ we set
\begin{equation}\label{eq_def_1_4_Dirichlet_energy}
E(u):=\int_{\mathbb{S}^1}L\left(u(x),(-\Delta)^\frac{1}{4}u(x)\right)dx.
\end{equation}
\begin{thm}\label{Thm: Noether-domain}
Let $X$ be a smooth vector field on $\mathbb{S}^1$ and assume that $L$ is invariant with respect to the variation generated by $X$, i.e.
\begin{equation}
L(u\circ\phi_t, (-\Delta)^\frac{1}{4} (u\circ\phi_t))(x)=L(u, (-\Delta)^\frac{1}{4} u)\circ \phi_t(x)\lvert D\phi_t(x)\rvert
\end{equation}
Assume that $u\in H^\frac{1}{2}(\mathbb{S}^1, \mathbb{R}^m)$ satisfies the following stationarity equation:
\begin{equation}\label{eq_prop_stationary_equation_14}
\frac{d}{dx}\left(L\left(u, (-\Delta)^\frac{1}{4}u\right)\right)X+D_\frac{1}{4}\left(D_pL\left(u, (-\Delta)^\frac{1}{4}u\right), u'\right)X=0.
\end{equation}
Then
\begin{equation}\label{eq_prop_result_Noether_thm_vardom_14}
\frac{d}{dx}\left(L\left(u, (-\Delta)^\frac{1}{4}u\right)X\right)+D_\frac{1}{4}\left(D_pL\left(u, (-\Delta)^\frac{1}{4}u\right), u'X\right)=0.
\end{equation}
\end{thm}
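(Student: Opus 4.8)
The plan is to follow the same two–stage scheme used for Theorem \ref{prop_final_version_Noether_thm_vardom_12}: first establish, for every \emph{smooth} $u$, the purely algebraic identity
\begin{equation}
\frac{d}{dx}\left(L\,X\right)+D_\frac{1}{4}\left(D_pL, u'X\right)=\frac{d}{dx}\left(L\right)X+D_\frac{1}{4}\left(D_pL, u'\right)X,
\end{equation}
where I abbreviate $L=L(u,(-\Delta)^\frac{1}{4}u)$ and $D_pL=D_pL(u,(-\Delta)^\frac{1}{4}u)$. The right-hand side is exactly the left-hand side of the stationarity equation \eqref{eq_prop_stationary_equation_14}, while the left-hand side is the quantity in the conclusion \eqref{eq_prop_result_Noether_thm_vardom_14}. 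Hence, once this identity is extended to $H^\frac{1}{2}(\mathbb{S}^1)$ by density together with the continuity properties of $D_\frac{1}{4}$, the stationarity hypothesis forces the right-hand side to vanish and the theorem follows at once.

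To derive the identity for smooth $u$ I would differentiate the invariance hypothesis in $t$ at $t=0$. By Taylor's theorem $u\circ\phi_t=u+tu'X+o_{C^1}(t)$, so $\frac{d}{dt}\vert_{t=0}(u\circ\phi_t)=u'X$ and, by linearity and continuity of $(-\Delta)^\frac{1}{4}$, $\frac{d}{dt}\vert_{t=0}(-\Delta)^\frac{1}{4}(u\circ\phi_t)=(-\Delta)^\frac{1}{4}(u'X)$. Applying the chain rule to the left side of the invariance (and writing $D_xL$ for the derivative of $L$ in its first slot) gives
\begin{equation}
\frac{d}{dt}\bigg\vert_{t=0}L\left(u\circ\phi_t,(-\Delta)^\frac{1}{4}(u\circ\phi_t)\right)=D_xL\cdot u'X+D_pL\cdot(-\Delta)^\frac{1}{4}(u'X),
\end{equation}
while differentiating the right side $L(u,(-\Delta)^\frac{1}{4}u)\circ\phi_t\,\phi_t'$ exactly as in \eqref{eq_derivative_diffeo_is_conformal14} produces $\frac{d}{dx}(L\,X)$. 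Substituting $D_xL\cdot u'=\frac{d}{dx}(L)-D_pL\cdot(-\Delta)^\frac{1}{4}u'$ and then invoking the two elementary rewritings
\begin{equation}
D_pL\cdot(-\Delta)^\frac{1}{4}(u'X)=(-\Delta)^\frac{1}{4}(D_pL)\cdot u'X-D_\frac{1}{4}(D_pL,u'X),
\end{equation}
\begin{equation}
D_pL\cdot(-\Delta)^\frac{1}{4}u'\,X=(-\Delta)^\frac{1}{4}(D_pL)\cdot u'X-D_\frac{1}{4}(D_pL,u')X,
\end{equation}
the common terms $(-\Delta)^\frac{1}{4}(D_pL)\cdot u'X$ cancel and I recover the displayed identity. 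This is the computational core of the argument, entirely parallel to \eqref{eq_derivative_of_f14}--\eqref{eq: formula-for Pohozaev}.

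Finally I would extend the identity to $u\in H^\frac{1}{2}$ by approximation. Choosing smooth $u_n\to u$ in $H^\frac{1}{2}$, the identity holds for each $u_n$, and I pass to the limit term by term in $\mathscr{D}'(\mathbb{S}^1)$. The quadratic growth \eqref{eq_well_def_energy_vardom}, the $L^2$–convergence of $(-\Delta)^\frac{1}{4}u_n$, and the embedding $H^\frac{1}{2}\hookrightarrow L^q$ for all $q<\infty$ yield $L(u_n,(-\Delta)^\frac{1}{4}u_n)\to L(u,(-\Delta)^\frac{1}{4}u)$ in $L^1$, hence convergence of the two $\frac{d}{dx}(L\,\cdot)$ terms; similarly one controls $D_pL(u_n,(-\Delta)^\frac{1}{4}u_n)$. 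I expect the main obstacle to lie in the two commutator terms $D_\frac{1}{4}(D_pL(u_n,\cdot),u_n'X)$ and $D_\frac{1}{4}(D_pL(u_n,\cdot),u_n')X$: these products of distributions are meaningful only through the continuous extension provided by Lemma \ref{lem: estimate div_1/commutator}, so I must verify that the pairs $(D_pL(u,\cdot),u'X)$ and $(D_pL(u,\cdot),u')$—with $u'\in H^{-\frac{1}{2}}$ and $D_pL$ controlled in the dual Sobolev space—lie precisely in the product on which $D_\frac{1}{4}$ is bounded, and that the approximating pairs converge in those exact topologies (matching the Sobolev exponents is the delicate point, since the nonlinear factor $D_pL(u_n,\cdot)$ typically converges only in $L^2$). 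Granting this continuity, the identity passes to the limit for $u\in H^\frac{1}{2}$; the stationarity assumption \eqref{eq_prop_stationary_equation_14} then annihilates the right-hand side, and the conclusion \eqref{eq_prop_result_Noether_thm_vardom_14} follows.
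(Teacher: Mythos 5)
Your proposal is correct and takes essentially the paper's own route: the paper does not prove Theorem \ref{Thm: Noether-domain} in the text but defers it to \cite{gaia-MT}, remarking that it follows ``with similar methods'' as Theorem \ref{prop_final_version_Noether_thm_vardom_12}, and your two-stage argument (differentiate the invariance at $t=0$ for smooth $u$, use the chain rule and the definition of $D_\frac{1}{4}$ so that the terms $(-\Delta)^\frac{1}{4}(D_pL)\cdot u'X$ cancel, then extend by density and invoke stationarity) is exactly that method, with correct algebra. The only repair needed is on the point you flagged as delicate: the relevant continuity is not Lemma \ref{lem: estimate div_1/commutator} (which is the $D_\frac{1}{2}$ estimate, on $H^{-\frac{1}{2}}\times H^{\frac{1}{2}}$) but Lemma \ref{lem: estimate for div} with $s=\frac{1}{4}$ combined with the antisymmetry $D_s(a,b)=-D_s(b,a)$, which bounds $D_\frac{1}{4}$ on pairs in $L^2\times H^{-\frac{1}{2}}$; this matches precisely the convergences you have, namely $D_pL(u_n,(-\Delta)^\frac{1}{4}u_n)\to D_pL(u,(-\Delta)^\frac{1}{4}u)$ in $L^2$ (granting a linear growth bound on $D_pL$, which is implicitly required already for \eqref{eq_prop_stationary_equation_14} to be meaningful) and $u_n'\to u'$ in $H^{-\frac{1}{2}}$, so the Sobolev exponents do match after all.
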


To give an application of Theorem \ref{prop_final_version_Noether_thm_vardom_12}, we show that the energy density $u\cdot (-\Delta)^\frac{1}{2}u$ is invariant under conformal transformations of the disc.\\
To this end recall that given any $u\in C^\infty(\mathbb{S}^1)$, its harmonic extension $\tilde{u}$ in $D^2$ is given by
\begin{align}
    \tilde{u}(r, \theta)=\sum_{n\in \mathbb{Z}}r^{\lvert n\rvert}e^{in\theta}\widehat{u}(n),
\end{align}
so that
\begin{align}
    \partial_r\tilde{u}(r, \theta)=\sum_{n\in \mathbb{Z}}\lvert n\rvert r^{\lvert n\rvert-1}e^{in\theta}\widehat{u}(n).
\end{align}
In particular we see that the half Laplacian $(-\Delta)^\frac{1}{2}$ coincides with the operator mapping a smooth function on $\mathbb{S}^1$ to the radial derivative of its harmonic extension on $\mathbb{S}^1$.\\
We also recall that the group of holomorphic diffeomorphisms of $D^2$, i.e. the M\"obius group $\mathscr{M}(D^2)$, consists of all the maps of the following form
\begin{equation}\label{eq_general_form_Mobius2}
    m_{a,\mu}:\,D^2\to D^2,\quad z\mapsto\mu\frac{z-a}{\bar{a}z-1},
\end{equation}
for $\mu$, $a\in \mathbb{C}$ with $\lvert \mu\rvert=1$, $\lvert a\rvert<1$.
\begin{lem}\label{lem: conformal-invariance}
    Let $\phi$ be a holomorphic diffeomorphism of $D^2$. Then for any $u\in C^\infty(\mathbb{S}^1)$ there holds
    \begin{align}
        (-\Delta)^\frac{1}{2}(u\circ\phi)(x)=\lvert\partial_\theta\phi (x)\rvert(-\Delta)^\frac{1}{2}u(\phi(x))
    \end{align}
    on $\mathbb{S}^1$.
    In particular, if $\{\phi_t\}_{t\in \mathbb{R}}$ is a smooth family of diffeomorphisms of $D^2$, then it satisfies \eqref{eq_diffeo_is_conformal12_2}.
\end{lem}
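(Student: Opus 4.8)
The plan is to exploit the identification, recalled just before the lemma, of $(-\Delta)^\frac{1}{2}$ with the Dirichlet-to-Neumann map of the disc: for smooth $u$ one has $(-\Delta)^\frac{1}{2}u=\partial_r\tilde u|_{\partial D^2}$, where $\tilde u$ is the harmonic extension. Since the asserted identity is componentwise in the target, I would first reduce to real-valued $u$. The first step is to observe that $\tilde u\circ\phi$ is harmonic on $D^2$, being the composition of a harmonic function with a holomorphic map, and that, because a holomorphic automorphism of $D^2$ (a Möbius map, smooth up to the boundary) restricts to a diffeomorphism of $\partial D^2$, the boundary trace of $\tilde u\circ\phi$ is exactly $u\circ\phi$. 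By uniqueness of harmonic extensions $\tilde u\circ\phi=\widetilde{u\circ\phi}$, and hence $(-\Delta)^\frac{1}{2}(u\circ\phi)=\partial_r(\tilde u\circ\phi)|_{\partial D^2}$. The lemma is thereby reduced to computing how the boundary normal derivative transforms under $\phi$.

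For this I would pass to a holomorphic primitive. Writing $\tilde u=\operatorname{Re}F$ with $F$ holomorphic and using $\partial_r=e^{i\theta}\partial_z+e^{-i\theta}\partial_{\bar z}$, one obtains $\partial_r\tilde u=\operatorname{Re}(zF'(z))$ on $\partial D^2$. Since $F\circ\phi$ is again holomorphic, the same formula applied to $\tilde u\circ\phi=\operatorname{Re}(F\circ\phi)$ gives $\partial_r(\tilde u\circ\phi)=\operatorname{Re}\big(z\,\phi'(z)\,F'(\phi(z))\big)$ on the boundary. The key algebraic fact is the boundary identity $z\,\phi'(z)=|\phi'(z)|\,\phi(z)$, obtained by differentiating $\phi(e^{i\theta})=e^{i\psi(\theta)}$ in $\theta$ (orientation-preservation yields $\psi'=|\phi'|>0$). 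Substituting and recognizing $\operatorname{Re}(\phi(z)F'(\phi(z)))=\partial_r\tilde u(\phi(z))=(-\Delta)^\frac{1}{2}u(\phi(z))$, together with $|\phi'(z)|=|\partial_\theta\phi(z)|$, delivers exactly $(-\Delta)^\frac{1}{2}(u\circ\phi)=|\partial_\theta\phi|\,(-\Delta)^\frac{1}{2}u\circ\phi$.

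For the final assertion I would apply the identity to each $\phi_t$. Parametrizing $\mathbb{S}^1$ by $\theta$ and writing the boundary map as $\theta\mapsto\psi_t(\theta)$, orientation-preservation gives $|\partial_\theta\phi_t|=\psi_t'=\phi_t'$, so the identity reads $(-\Delta)^\frac{1}{2}(v\circ\phi_t)=\phi_t'\,(-\Delta)^\frac{1}{2}v\circ\phi_t$; taking the (non-hermitian) dot product with $v\circ\phi_t$ reproduces \eqref{eq_diffeo_is_conformal12_2}.

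The main obstacle, and really the only non-formal point, is the transformation law for the normal derivative, namely establishing that it scales precisely by $|\phi'|=|\partial_\theta\phi|$. This is where conformality enters: the cleanest route is the holomorphic-primitive computation above, whose crux is the boundary identity $z\,\phi'(z)=|\phi'(z)|\,\phi(z)$. A more geometric alternative is to note that the conformal Jacobian $D\phi$ scales every direction by $|\phi'|$ and, preserving the boundary, sends the radial direction to the radial direction, so that the normal derivative picks up exactly this factor.
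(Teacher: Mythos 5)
Your proposal is correct and follows essentially the same route as the paper: both reduce the claim, via the Dirichlet-to-Neumann characterization of $(-\Delta)^\frac{1}{2}$ and the harmonicity of $\tilde{u}\circ\phi$, to the transformation of the boundary radial derivative under $\phi$, and both hinge on the same key boundary fact --- your identity $z\,\phi'(z)=\lvert\phi'(z)\rvert\,\phi(z)$ is exactly the paper's statement $\partial_r\phi=\lvert\partial_r\phi\rvert\,\phi$ on $\partial D^2$, combined with $\lvert\phi'\rvert=\lvert\partial_\theta\phi\rvert$. The only difference is cosmetic: you compute $\partial_r\tilde{u}=\operatorname{Re}\left(zF'(z)\right)$ through a holomorphic primitive $F$, whereas the paper applies the real chain rule $\partial_r(\tilde{u}\circ\phi)=D\tilde{u}\circ\phi\cdot\partial_r\phi$ directly.
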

\begin{proof}
Let $u\in C^\infty(\mathbb{S}^1)$, let $\tilde{u}$ be its harmonic extension in $D^2$ and let $\phi\in \mathscr{M}(D^2)$. Notice that since $\phi$ is of the form \eqref{eq_general_form_Mobius2}, it extends to a diffeomorphism of $\overline{D^2}$, which we still denote $\phi$.
We observe that $\tilde{u}\circ \phi$ is harmonic as composition of an harmonic map with an holomorphic map.
Now, for any $r\in (0,1]$
\begin{equation}
\partial_r (\tilde{u}\circ\phi)=D\tilde{u}\circ \phi\cdot\partial_r\phi.
\end{equation}
We also observe that since $\phi$ restricts to a diffeomorphism of $\mathbb{S}^1$ and it is conformal and positively oriented, for any $\theta\in \mathbb{S}^1$
\begin{equation}
\partial_r \phi(1,\theta)=\frac{\lvert \partial_r \phi(1,\theta)\rvert}{\lvert \phi(1,\theta)\rvert}\phi(1,\theta)=\lvert \partial_r \phi (1, \theta)\rvert\phi(1, \theta).
\end{equation}
Therefore for any $\theta\in \mathbb{S}^1$
\begin{align}\label{eq_lim_boundary_smooth_case}
(-\Delta)^\frac{1}{2}(u\circ \phi)(1,\theta)=& D\tilde{u}(\phi(1,\theta))\cdot\partial_r\phi(1,\theta)=
D\tilde{u}(\phi(1,\theta))\cdot \phi(1, \theta)\lvert \partial_r \phi (1, \theta)\rvert\\
\nonumber
=&(-\Delta)^\frac{1}{2}u(\phi(1,\theta))\lvert\partial_\theta\phi(1,\theta)\rvert,
\end{align}
where in the last step we used the fact that $\lvert\frac{1}{r} \partial_\theta \phi\rvert=\lvert\partial_r\phi\rvert$ on $\mathbb{S}^1$ as $\phi$ is holomorphic.
\end{proof}
As a concrete example, let's first consider rotations of the disc: let $a=1$ and $\mu=e^{it}$ for $t\in \mathbb{R}$. On $\mathbb{S}^1$, thought of as $\quot{\mathbb{R}}{2\pi\mathbb{Z}}$, this corresponds to the flow $\phi_t(x)=x+t$ for $x\in \mathbb{S}^1$, $t\in\mathbb{R}$. This is the flow generated by $X\equiv1$. Therefore, in this case, (\ref{eq_prop_result_Noether_thm_vardom_14}) is equivalent to (\ref{eq_prop_stationary_equation_14}).\\
Now let's consider the family of diffeomorphisms corresponding to $\mu=1$, $a_t=e^{i\delta}t$ for some $\delta\in [0,1)$ and $t\in \mathbb{R}$ in a neighbourhood of $0$, i.e.
\begin{align}
    {\phi}_t(r,\theta)=\frac{re^{i\theta}-te^{i\delta}}{r t e^{i(\theta-\delta)}-1}
\end{align}
On $\mathbb{S}^1\simeq\partial D^2$, this corresponds to the flow of the vector field 
\begin{equation}\label{eq: dilations-disc}
Y(e^{i\theta})=e^{i\delta}-e^{i(2\theta-\delta)}=2ie^{i\theta}\sin(\delta-\theta)
\end{equation}
for any $\theta\in [0,2\pi)$. Thus, on $\mathbb{S}^1\simeq \quot{\mathbb{R}}{2\pi\mathbb{Z}}$, $Y$ takes the form $Y(x)=2\sin(\delta-x)$ for any $x\in \mathbb{S}^1$. Geometrically, $Y$ induces a "dialation on $\mathbb{S}^1$" around the point $e^{i\delta}$. For the vector field $Y$, Theorem \ref{prop_final_version_Noether_thm_vardom_12} yields the following result.
\begin{cor}\label{cor: conservation-law}
For any stationary point $u\in H^\frac{1}{2}(\mathbb{S}^1)$ of $E_\frac{1}{2}$, for any $\delta\in [0,2\pi)$ there holds
\begin{equation}\label{eq_discussion_traces_of_conformal_maps_conserved_quantity_moving_a_2}
    \frac{d}{dx}\left(u\cdot(-\Delta)^\frac{1}{2}u\sin(\delta-x)\right)=D_\frac{1}{2}\left(u'\sin(\delta-x), u\right).
\end{equation}
\end{cor}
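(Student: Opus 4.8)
The plan is to obtain \eqref{eq_discussion_traces_of_conformal_maps_conserved_quantity_moving_a_2} as a direct specialization of Theorem \ref{prop_final_version_Noether_thm_vardom_12} to the vector field $Y$ whose restriction to $\mathbb{S}^1\simeq \mathbb{R}/2\pi\mathbb{Z}$ is $Y(x)=2\sin(\delta-x)$, as computed in \eqref{eq: dilations-disc}. Concretely I would proceed in three short steps: first verify that the flow of $Y$ satisfies the conformal-invariance hypothesis \eqref{eq_diffeo_is_conformal12_2} of the theorem; then invoke the theorem with $X=Y$; and finally remove the irrelevant constant factor $2$ by linearity.

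For the first step I would use that the family $\phi_t$ displayed just above the statement is a smooth family of M\"obius transformations, hence of holomorphic diffeomorphisms of $D^2$. By Lemma \ref{lem: conformal-invariance}, any such family automatically satisfies \eqref{eq_diffeo_is_conformal12_2}, which is exactly the hypothesis required of the flow of $X$ in Theorem \ref{prop_final_version_Noether_thm_vardom_12}. The one point requiring a little care is that with the normalization $\mu=1$, $a_t=e^{i\delta}t$ one has $\phi_0=-\mathrm{id}$ rather than the identity, so $\{\phi_t\}$ is not literally the flow of $Y$; since $(-z)^2=z^2$, however, the generator on $\mathbb{S}^1$ is still $Y$, and replacing $\phi_t$ by $\psi_t:=\phi_t\circ\phi_0^{-1}$ produces a genuine one-parameter family of M\"obius transformations with $\psi_0=\mathrm{id}$ and the same generator $Y$. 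Lemma \ref{lem: conformal-invariance} applies equally to $\{\psi_t\}$, so the hypothesis of Theorem \ref{prop_final_version_Noether_thm_vardom_12} is met.

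With the hypothesis verified, Theorem \ref{prop_final_version_Noether_thm_vardom_12} applied to the stationary point $u$ and $X=Y$ yields
\begin{align*}
\frac{d}{dx}\left((-\Delta)^\frac{1}{2}u\cdot u\,Y\right)=D_\frac{1}{2}(u'Y,u).
\end{align*}
Substituting $Y(x)=2\sin(\delta-x)$ and using that both sides are linear in $X$ --- the left-hand side through the product $(-\Delta)^\frac{1}{2}u\cdot u\,Y$ and the right-hand side through the linearity of $D_\frac{1}{2}$ in its first argument --- the common factor $2$ cancels, giving exactly \eqref{eq_discussion_traces_of_conformal_maps_conserved_quantity_moving_a_2}.

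I expect essentially no analytic difficulty here: all the substantive work, namely the distributional meaning of $D_\frac{1}{2}$ (via Lemma \ref{lem: estimate div_1/commutator}), the passage to the limit from smooth approximants, and the conformal-invariance identity, has already been carried out in Theorem \ref{prop_final_version_Noether_thm_vardom_12} and Lemma \ref{lem: conformal-invariance}. The only genuine obstacle is the bookkeeping around the flow described above --- making sure that the family of M\"obius maps one feeds into Lemma \ref{lem: conformal-invariance} is correctly normalized so as to be the flow of $Y$, and that the conclusion is then divided by the scaling factor $2$.
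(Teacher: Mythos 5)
Your proposal is correct and is exactly the paper's route: the paper states the corollary as an immediate application of Theorem \ref{prop_final_version_Noether_thm_vardom_12} to the vector field $Y(x)=2\sin(\delta-x)$ of \eqref{eq: dilations-disc}, with the hypothesis \eqref{eq_diffeo_is_conformal12_2} supplied by Lemma \ref{lem: conformal-invariance} and the factor $2$ absorbed by linearity. Your renormalization $\psi_t=\phi_t\circ\phi_0^{-1}$ to fix $\phi_0=-\mathrm{id}$ is a small but genuine clarification of a point the paper glosses over, and it changes nothing in the argument since the generator is unchanged.
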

Finally we remark that the invariance properties of the half Dirichlet energy imply the following integral relation.
\begin{lem}\label{cor_Pohozaev_identities_on_the_circle}
    Let $X$ be a vector field on $\mathbb{S}^1$ whose flow consists of traces of holomorphic diffeomorphisms of $D^2$.
Then for any $u\in H^\frac{1}{2}(\mathbb{S}^1)$ there holds
\begin{equation}\label{eq_prop_Pohozaev_pointwise_circle}
\langle \mathcal{V}_\frac{1}{2}(u), X\rangle=0.
\end{equation}
\end{lem}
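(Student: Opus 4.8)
The plan is to reduce \eqref{eq_prop_Pohozaev_pointwise_circle} to the invariance of the half Dirichlet energy along the flow $\phi_t$ of $X$, and then to differentiate. By Proposition \ref{prop: first-variation-formula},
\[
\langle\mathcal{V}_\frac{1}{2}(u), X\rangle=\frac{d}{dt}\bigg\vert_{t=0}E_\frac{1}{2}(u\circ\phi_t),
\]
so it suffices to prove that the function $t\mapsto E_\frac{1}{2}(u\circ\phi_t)$ is constant; its derivative at $t=0$ is then zero, which is exactly the claimed identity.

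To show $E_\frac{1}{2}(u\circ\phi_t)=E_\frac{1}{2}(u)$, I would first take $u\in C^\infty(\mathbb{S}^1)$ and write the energy in the quadratic form $E_\frac{1}{2}(u)=\int_{\mathbb{S}^1}u\cdot(-\Delta)^\frac{1}{2}u$, which holds for smooth $u$ by self-adjointness of $(-\Delta)^\frac{1}{4}$. Applying the transformation law of Lemma \ref{lem: conformal-invariance}, namely $(-\Delta)^\frac{1}{2}(u\circ\phi_t)(x)=\lvert\partial_\theta\phi_t(x)\rvert(-\Delta)^\frac{1}{2}u(\phi_t(x))$, gives
\[
E_\frac{1}{2}(u\circ\phi_t)=\int_{\mathbb{S}^1}u(\phi_t(x))\cdot(-\Delta)^\frac{1}{2}u(\phi_t(x))\,\lvert\partial_\theta\phi_t(x)\rvert\,dx.
\]
Since each $\phi_t$ is an orientation-preserving diffeomorphism of $\mathbb{S}^1$, the change of variables $y=\phi_t(x)$ has Jacobian $\lvert\partial_\theta\phi_t(x)\rvert$, so the single factor $\lvert\partial_\theta\phi_t\rvert$ coming from the transformation law is exactly what is needed to recover $\int_{\mathbb{S}^1}u(y)\cdot(-\Delta)^\frac{1}{2}u(y)\,dy=E_\frac{1}{2}(u)$. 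Conceptually this is the conformal invariance of the half Dirichlet energy; one may equivalently argue through harmonic extensions, using that the harmonic extension of $u\circ\phi_t$ is $\tilde u\circ\tilde\phi_t$, with $\tilde\phi_t$ the holomorphic extension of $\phi_t$, and that $\int_{D^2}\lvert\nabla(\tilde u\circ\tilde\phi_t)\rvert^2=\int_{D^2}\lvert\nabla\tilde u\rvert^2$ by the conformal invariance of the two-dimensional Dirichlet energy.

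To pass from smooth $u$ to arbitrary $u\in H^\frac{1}{2}(\mathbb{S}^1)$ I would invoke a density argument: the precomposition $u\mapsto u\circ\phi_t$ is a bounded operator on $H^\frac{1}{2}(\mathbb{S}^1)$ because $\phi_t$ is a fixed smooth diffeomorphism, and $E_\frac{1}{2}$ is continuous on $H^\frac{1}{2}(\mathbb{S}^1)$, so the identity $E_\frac{1}{2}(u\circ\phi_t)=E_\frac{1}{2}(u)$ extends from the dense subspace $C^\infty(\mathbb{S}^1)$ to all of $H^\frac{1}{2}(\mathbb{S}^1)$. The harmonic-extension formulation above already applies verbatim to $u\in H^\frac{1}{2}(\mathbb{S}^1)$ and makes this step unnecessary.

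I expect the only point requiring genuine care to be the regularity bookkeeping. In particular one must check that the flow $\phi_t$ is a smooth one-parameter family of diffeomorphisms of $\mathbb{S}^1$, so that both Proposition \ref{prop: first-variation-formula} and Lemma \ref{lem: conformal-invariance} are applicable; this follows from the explicit description \eqref{eq_general_form_Mobius2} of the holomorphic diffeomorphisms of $D^2$, which extend smoothly to $\overline{D^2}$ and restrict to smooth diffeomorphisms of $\mathbb{S}^1$. All the remaining steps are a direct change of variables, and once the energy is shown to be constant along the flow the identity $\langle\mathcal{V}_\frac{1}{2}(u),X\rangle=0$ is immediate.
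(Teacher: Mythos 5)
Your proof is correct and takes essentially the same route as the paper's: establish the invariance $E_\frac{1}{2}(u\circ\phi_t)=E_\frac{1}{2}(u)$ from the transformation law of Lemma \ref{lem: conformal-invariance} together with a change of variables, and then differentiate at $t=0$ using Proposition \ref{prop: first-variation-formula}. The only difference is that you spell out the smooth-to-$H^\frac{1}{2}$ density step (and note the equivalent harmonic-extension argument), which the paper's proof leaves implicit.
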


\begin{proof}
    Let $u\in H^\frac{1}{2}(\mathbb{S}^1)$. By Lemma \ref{lem: conformal-invariance}, for any conformal diffoemorphism $\phi$ there holds \eqref{eq_diffeo_is_conformal12_2}.
    Therefore a change of variables yields
    \begin{align}
        E_\frac{1}{2}(u\circ\phi)=E_\frac{1}{2}(u).
    \end{align}
    Thus for any vector field $X$ as in the lemma we have
    \begin{align}
        \langle \mathcal{V}_\frac{1}{2}(u), X\rangle=\frac{d}{dt}\bigg\vert_{t=0}E_\frac{1}{2}(u\circ\phi_t)=0,
    \end{align}
    where $\phi_t$ denotes the flow of $X$.\\
    Alternatively, the Lemma can be obtained integrating over $\mathbb{S}^1$ identity \eqref{eq: formula-for Pohozaev}.  
\end{proof}
\begin{rem}
    For the vector fields considered above, the lemma implies
\begin{equation}
\int_{\mathbb{S}^1}u'(x)(-\Delta)^\frac{1}{2}u(x)dx=0,
\end{equation}
and, for any $\delta\in \mathbb{S}^1$,
\begin{equation}\label{eq_prop_Pohozaev_circle_dilation_integral_form}
\int_{\mathbb{S}^1}u'(x)(-\Delta)^\frac{1}{2}u(x)\sin(\delta-x)dx=0
\end{equation}
for any $u\in H^1(\mathbb{S}^1)$.
\end{rem}
\begin{rem}
    It is also possible to obtain Noether theorems for $E_\frac{1}{2}$ for variations in the target, see \cite{gaia-MT}, Section 3. For instance, one can show that the invariance under rotations in the target of the energy density $u\cdot(-\Delta)^\frac{1}{2}u$ among maps taking values in a sphere $\mathbb{S}^k$ implies the following conservation law for $\frac{1}{2}$-harmonic maps:
    \begin{align}\label{eq: fractional-conservation-law-target}
        \operatorname{div}_\frac{1}{2}\left(u^j(x)\operatorname{d}_\frac{1}{2}u^i(x,y)-u^i(x)\operatorname{d}_\frac{1}{2}u^j(x,y)\right)=0\qquad \forall i,j\in \{1,...,k\}
    \end{align}
    (for the definition of the fractional gradient $\operatorname{d}_\frac{1}{2}$ see \cite{Mazowiecka-Schikorra}).
    This is the fractional counterpart to the following conservation for harmonic maps with values in a sphere $\mathbb{S}^k$:
    \begin{align}\label{eq: conservation-law-target}
        \operatorname{div}\left(u^i\nabla u^j-u^j\nabla u^i\right)=0\qquad\forall i,j\in \{1,...,k\},
    \end{align}
    which can also be obtained as a consequence of the invariance under rotation of the energy density $\lvert\nabla u\rvert^2$. \eqref{eq: conservation-law-target} plays a crucial role in F. H\'elein's proof of the regularity for weakly harmonic maps from a surface to a sphere (see \cite{Helein-regularity}). Similarly \eqref{eq: fractional-conservation-law-target} has been used in \cite{Mazowiecka-Schikorra} to obtain a new proof of the regularity of $\frac{1}{2}$-harmonic maps from $\mathbb{R}$ to a sphere.
\end{rem}

\appendix
\section{Commutator estimates and fractional divergences}
In this appendix we will derive some estimate for commutators, which will allow us to extend the definition of the operator $D_s(a,b)$ in a distributional sense to a wide family of maps $a,b$. We will then see how the operator $D_s(a,b)$ is related to the fractional divergence operator introduced in \cite{Mazowiecka-Schikorra}.\\
We will make use of the following function space.
\begin{defn}\label{defn: Wiener-algebra}
Let $\mathbb{A}(\mathbb{S}^1)$ be the space of all the elements $u$ in $\mathscr{D}'(\mathbb{S}^1)$ such that
\begin{equation}
\lVert u\rVert_{\mathbb{A}}:=\sum_{k\in\mathbb{Z}}\lvert \widehat{u}(k)\rvert<\infty.
\end{equation}
$\mathbb{A}(\mathbb{S}^1)$ is called \textbf{Wiener algebra}.
\end{defn}
\begin{rem}\label{rem_Wiener_algebra}
Recall that for any $s>\frac{1}{2}$ there is a continuous embedding $H^s(\mathbb{S}^1)\hookrightarrow \mathbb{A}(\mathbb{S}^1)$. Indeed, for any $u\in H^s(\mathbb{S}^1)$,\begin{equation}\label{eq_rem_embedding_Sobolev_in_Wiener}
\sum_{\substack{k\in\mathbb{Z}\\ k\neq 0}}\lvert \widehat{u}(k)\rvert\leq \left(\sum_{\substack{k\in\mathbb{Z}\\ k\neq 0}}\lvert \widehat{u}(k)\rvert^2\lvert k\rvert^{2s}\right)^\frac{1}{2}\left(\sum_{\substack{k\in\mathbb{Z}\\ k\neq 0}}\lvert k\rvert^{-2s}  \right)^\frac{1}{2}
\end{equation}
by Cauchy-Schwartz inequality. As $s>\frac{1}{2}$, the second factor is finite. Moreover $\lvert \widehat{u}(0)\rvert\lvert \leq\lVert u\rVert_{H^s}$.\\
In particular, all the results of this section have a slightly weaker formulation in terms of Sobolev spaces only.
\end{rem}
\begin{lem}\label{lem: estimate for div}
    Let $s\in \left(0,\frac{1}{2}\right)$. For any $a,b,\varphi\in C^\infty(\mathbb{S}^1)$ there holds
    \begin{align}
        \left\lvert D_s(a,b)\varphi\right\rvert\leq C\lVert \varphi'\rVert_{\mathbb{A}}\lVert a\rVert_{H^{2s-1}}\lVert b\rVert_{L^2}.
    \end{align}
\end{lem}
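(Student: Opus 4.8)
The plan is to pass to the Fourier side and read off the estimate from a pointwise bound on the multiplier attached to $D_s$. Since $a,b,\varphi$ are smooth their Fourier coefficients decay rapidly, so all the rearrangements below are of absolutely convergent series. Using the convention $\widehat{f}(n)=\frac{1}{2\pi}\int_{\mathbb{S}^1}fe^{-inx}$, computing the trilinear integral and relabelling $k\leftrightarrow l$ in the contribution of $(-\Delta)^sb\,a$ gives
\begin{align}
\int_{\mathbb{S}^1}D_s(a,b)\varphi\,dx=2\pi\sum_{k+l+m=0}\left(\lvert k\rvert^{2s}-\lvert l\rvert^{2s}\right)\widehat{a}(k)\,\widehat{b}(l)\,\widehat{\varphi}(m).
\end{align}
The decisive algebraic feature is that the symbol $\lvert k\rvert^{2s}-\lvert l\rvert^{2s}$ vanishes on the diagonal $k+l=0$, i.e. when $m=0$. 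Hence only frequencies $m\neq 0$ contribute, and there we may substitute $\widehat{\varphi}(m)=\widehat{\varphi'}(m)/(im)$. Recalling $m=-(k+l)$, this turns the sum into
\begin{align}
\int_{\mathbb{S}^1}D_s(a,b)\varphi\,dx=2\pi i\sum_{\substack{k,l\in\mathbb{Z}\\ k+l\neq 0}}K(k,l)\,\widehat{a}(k)\,\widehat{b}(l)\,\widehat{\varphi'}\bigl(-(k+l)\bigr),\qquad K(k,l):=\frac{\lvert k\rvert^{2s}-\lvert l\rvert^{2s}}{k+l}.
\end{align}

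The heart of the matter is the uniform multiplier bound
\begin{align}
\lvert K(k,l)\rvert\leq C\langle k\rangle^{2s-1}\qquad\text{for all }k,l\in\mathbb{Z}\text{ with }k+l\neq 0,
\end{align}
where $\langle k\rangle=(1+k^2)^{1/2}$ and $C=C(s)$; note $2s-1<0$ since $s<\tfrac12$. To prove it I would distinguish the sign configurations. When $k,l$ share a sign, $\lvert k+l\rvert=\lvert k\rvert+\lvert l\rvert$ dominates $\max(\lvert k\rvert,\lvert l\rvert)$, so the crude estimate $\lvert\,\lvert k\rvert^{2s}-\lvert l\rvert^{2s}\rvert\leq\max(\lvert k\rvert,\lvert l\rvert)^{2s}$ together with $2s-1<0$ yields the claim directly (the case $k=0$ is immediate from $\lvert K(0,l)\rvert=\lvert l\rvert^{2s-1}\leq 1$). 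When $k,l$ have opposite signs, writing $p=\lvert k\rvert$, $q=\lvert l\rvert$ reduces the inequality $\lvert p^{2s}-q^{2s}\rvert\leq C\lvert p-q\rvert\,p^{2s-1}$ to the scalar estimates $\frac{1-t^{2s}}{1-t}\leq C(s)$ for $t\in[0,1)$ and $\frac{u^{2s}-1}{u-1}\leq C(s)$ for $u>1$ (via $t=q/p$ or $u=q/p$); both quotients are bounded because $2s\in(0,1)$, being continuous with finite limit $2s$ as the variable tends to $1$ and bounded otherwise. I expect this step, and in particular the opposite-sign regime where cancellation in the numerator must be matched against the small denominator $k+l$, to be the main obstacle; the crude Hölder bound $\lvert\,\lvert k\rvert^{2s}-\lvert l\rvert^{2s}\rvert\le\lvert k-l\rvert^{2s}$ is too lossy here and the reduction to the scalar quotients is what makes it work.

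Granting the multiplier bound, the conclusion is a routine discrete Cauchy--Schwarz. Taking absolute values and grouping by the output frequency $m=-(k+l)$,
\begin{align}
\left\lvert\int_{\mathbb{S}^1}D_s(a,b)\varphi\,dx\right\rvert\leq 2\pi\sum_{m\in\mathbb{Z}}\lvert\widehat{\varphi'}(m)\rvert\left\lvert\sum_{k+l=-m}K(k,l)\,\widehat{a}(k)\,\widehat{b}(l)\right\rvert.
\end{align}
For each fixed $m$, a Cauchy--Schwarz in $k$ (with $l=-m-k$) that places the weight $\langle k\rangle^{2s-1}$ on $a$ and absorbs the bounded factor $K(k,l)\langle k\rangle^{1-2s}$ into the $b$ term gives, uniformly in $m$,
\begin{align}
\left\lvert\sum_{k+l=-m}K(k,l)\,\widehat{a}(k)\,\widehat{b}(l)\right\rvert\leq C\Bigl(\sum_{k}\langle k\rangle^{2(2s-1)}\lvert\widehat{a}(k)\rvert^2\Bigr)^{1/2}\Bigl(\sum_{l}\lvert\widehat{b}(l)\rvert^2\Bigr)^{1/2}\leq C\lVert a\rVert_{H^{2s-1}}\lVert b\rVert_{L^2}.
\end{align}
Summing over $m$ and recognising $\sum_{m}\lvert\widehat{\varphi'}(m)\rvert=\lVert\varphi'\rVert_{\mathbb{A}}$ yields the stated bound, the various factors of $2\pi$ from the normalisation being absorbed into $C$.
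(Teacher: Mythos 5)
Your proof is correct and is essentially the paper's argument in different packaging: both work on the Fourier side and rest on the same cancellation, namely that $\bigl\lvert \lvert k\rvert^{2s}-\lvert l\rvert^{2s}\bigr\rvert$ is controlled by $\lvert k+l\rvert\,\langle k\rangle^{2s-1}$ (in the paper this is the mean-value-theorem/crude-bound estimate for the commutator symbol, the factor $\lvert k+l\rvert$ being the frequency of $\varphi$), and both conclude with the same $\ell^1\times\ell^2\times\ell^2$ bookkeeping --- Young's convolution inequality there, Cauchy--Schwarz per output frequency here. The only cosmetic differences are that the paper routes the estimate through an $L^2$ bound for the commutator $[(-\Delta)^s,a]\varphi$ before pairing with $b$, and splits frequencies by relative magnitude rather than by sign; neither changes the substance, and your sign-based case analysis with the scalar reduction to the boundedness of $(1-t^{2s})/(1-t)$ is a clean, correct replacement.
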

\begin{proof}
We define the action of the commutator $[(-\Delta)^s, a]$ on $\varphi$ as follows:
\begin{align}
    [(-\Delta)^s, a]\varphi=(-\Delta)^s(a\varphi)-a(-\Delta)^s\varphi.
\end{align}
To prove the Lemma it will be enough to show the following estimate:
    \begin{align}\label{eq: commutator-estimate-proof}
        \left\lVert  [(-\Delta)^s, a]\varphi\right\rVert_{L^2}\leq C\lVert \varphi'\rVert_{\mathbb{A}}\lVert a\rVert_{H^{2s-1}}.
    \end{align}
For any $n\in \mathbb{Z}$ we have
    \begin{equation}
\begin{split}
\mathscr{F}\left([(-\Delta)^s, a]\varphi\right)(n)=&\lvert n\rvert^{2s} \widehat{a\varphi}(n)-\sum_{k\in\mathbb{Z}}\lvert n-k\rvert^{2s} \widehat{a}(n-k)\widehat{\varphi}(k)\\
=&\lvert n \rvert^{2s}\sum_{k\in\mathbb{Z}}\widehat{a}(n-k)\widehat{\varphi}(k)-\sum_{k\in\mathbb{Z}}\lvert n-k\rvert^{2s} \widehat{a}(n-k)\widehat{\varphi}(k)\\
=&\sum_{k\in\mathbb{Z}}\left(\lvert n\rvert^{2s}-\lvert n-k\rvert^{2s}\right)\widehat{\varphi}(k)\widehat{a}(n-k).
\end{split}
\end{equation}
By Plancherel's identity
\begin{align}
  \left\lVert  [(-\Delta)^s, a]\varphi\right\rVert_{L^2}^2=4\pi^2\sum_{n\in \mathbb{Z}}\left\lvert\sum_{k\in\mathbb{Z}}\left(\lvert n\rvert^{2s}-\lvert n-k\rvert^{2s}\right)\widehat{\varphi}(k)\widehat{a}(n-k)\right\rvert^2.   
\end{align}
Observe that if $\lvert k\rvert\geq 2\lvert n\rvert>0$, there holds
$\frac{\lvert k\rvert}{2}\leq \lvert k+n\rvert$; thus
\begin{equation}
\begin{split}
\left\lvert \lvert k+n\rvert^{2s}-\lvert k\rvert^{2s}\right\rvert\leq& s\int_{\lvert k\rvert\wedge\lvert n+k\rvert}^{\lvert k\rvert\vee\lvert n+k\rvert}r^{2s-1}dr\leq s \lvert n\rvert\left(\frac{\lvert k\rvert}{2}\right)^{2s-1}\\
\leq& 3^{1-2s}s \lvert n\rvert\lvert k-n\rvert^{2s-1}.
\end{split}
\end{equation}
Therefore Young's Inequality yields
\begin{equation}\label{eq_proof_lemma_L2norm_Gphi_firstsum}
\begin{split}
&\sum_{k\in\mathbb{Z}}\left(\sum_{0<\lvert n\rvert\leq\frac{\lvert k\rvert}{2}}\left\lvert \widehat{\varphi}(n)\widehat{a}(k-n)\right\rvert\left\lvert\lvert k+n\rvert^{2s}-\lvert k\rvert^{2s}\right\rvert\right)^2\\
\leq&3^{2(1-2s)}s^2\sum_{k\in\mathbb{Z}}\left(\sum_{0<\lvert n\rvert\leq\frac{\lvert k\rvert}{2}}\left\lvert \widehat{\varphi}(n) n\right\rvert\left\lvert\widehat{a}(k-n)\right\rvert\lvert k-n\rvert^{2s-1}\right)^2\\
\leq& 3^{2(1-2s)}s^2\left(\sum_{n\in \mathbb{N}}\left\lvert \widehat{\varphi}(n)n\right\rvert\right)^2\left(\sum_{k\in\mathbb{Z}}\left\lvert \widehat{a}(k)\right\rvert^2\lvert k\rvert^{4s-2}\right)=3^{2(1-2s)}s^2\lVert\varphi'\rVert_{\mathbb{A}}^2[a]_{\dot{H}^{2s-1}}^2.
\end{split}
\end{equation}
On the other hand, we always have
\begin{equation}
\left\lvert\lvert n+k\rvert^{2s}-\lvert k\rvert^{2s}\right\rvert\leq \lvert n\rvert^{2s},
\end{equation}
and if $\lvert k\rvert<2\lvert n\rvert$, $\lvert k-n\rvert\leq 3\lvert n\rvert$.
Therefore
\begin{equation}\label{eq_proof_lemma_L2norm_Gphi_secondsum}
\begin{split}
&\sum_{k\in\mathbb{Z}}\left(\sum_{\substack{\frac{\lvert k\rvert}{2}<\lvert n\rvert}}\left\lvert \widehat{\varphi}(n)\widehat{a}(k-n)\right\rvert\left\lvert\lvert k+n\rvert^{2s}-\lvert k\rvert^{2s}\right\rvert\right)^2\\
\leq&\sum_{k\in\mathbb{Z}}\left(\sum_{\substack{\frac{\lvert k\rvert}{2}<\lvert n\rvert}}\left\lvert \widehat{\varphi}(n)\right\rvert\lvert n\rvert^{2s}\lvert k-n\rvert^{1-2s}\left\lvert\widehat{a}(k-n)\right\rvert\lvert k-n\rvert^{2s-1}\right)^2\\
\leq & 3^{2(1-2s)} \sum_{k\in\mathbb{Z}}\left(\sum_{n\in \mathbb{Z}}\left\lvert \widehat{\varphi}(n)\right\rvert\lvert n\rvert\left\lvert\widehat{a}(k-n)\right\rvert\lvert\lvert k-n\rvert^{2s-1}\right)^2\\
\leq&3^{2(1-2s)}\left(\sum_{n\in \mathbb{Z}}\left\lvert \widehat{\varphi}(n)n\right\rvert\right)^2\left(\sum_{k\in\mathbb{Z}}\left\lvert \widehat{a}(k)\right\rvert^2\lvert k\rvert^{4s-2}\right)=3^{2(1-2s)}\lVert \varphi'\rVert_{\mathbb{A}}^2[ a]_{\dot{H}^{2s-1}}^2,
\end{split}
\end{equation}
where the second-last steps follow again from Young's inequality.
Therefore, combining (\ref{eq_proof_lemma_L2norm_Gphi_firstsum}) and (\ref{eq_proof_lemma_L2norm_Gphi_secondsum}) we obtain
\begin{equation}\label{eq_proof_lemma_intermediate_step_expression_with_seminorm}
 \left\lVert  [(-\Delta)^s, a]\varphi\right\rVert_{L^2}^2\leq 4\pi^2 3^{2(1-2s)}(1+s^2)\lVert\varphi\rVert_{\mathbb{A}}^2[a]_{\dot{H}^{2s-1}}^2
\end{equation}
This concludes the proof of the Lemma.
\end{proof}

\begin{lem}\label{lem: estimate div_1/commutator}
    Let $a,b,\varphi\in C^\infty(\mathbb{S}^1)$. Then
\begin{align}\label{eq: commutator-estimate-1}
    \left\lvert D_\frac{1}{2}(a,b)\varphi\right\rvert\leq  C\lVert(-\Delta)^\frac{3}{4}\varphi\rVert_{\mathbb{A}}\lVert a\rVert_{H^{-\frac{1}{2}}} \lVert b\rVert_{H^\frac{1}{2}}.
\end{align}
\end{lem}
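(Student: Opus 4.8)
The plan is to estimate the pairing $D_\frac12(a,b)\varphi=\int_{\mathbb{S}^1}D_\frac12(a,b)\varphi$ directly on the Fourier side, exploiting the fact that at the endpoint $s=\frac12$ the multiplier attached to $D_\frac12$ is $|m|-|k-m|$, which — unlike the general symbol $|n|^{2s}-|n-k|^{2s}$ used in Lemma \ref{lem: estimate for div} — obeys the reverse triangle inequality exactly. First I would compute
\begin{align}
\mathscr{F}\left(D_\frac12(a,b)\right)(k)=\sum_{m\in\mathbb{Z}}\left(|m|-|k-m|\right)\widehat{a}(m)\widehat{b}(k-m),
\end{align}
so that, writing $n=-k$ for the frequency tested against $\varphi$,
\begin{align}
\int_{\mathbb{S}^1}D_\frac12(a,b)\varphi=2\pi\sum_{n\in\mathbb{Z}}\widehat{\varphi}(n)\,G(n),\qquad G(n):=\sum_{m\in\mathbb{Z}}\left(|m|-|n+m|\right)\widehat{a}(m)\widehat{b}(-n-m).
\end{align}
The $n=0$ term vanishes identically, since the symbol $|m|-|m|$ is zero; this is consistent with the fact that the right-hand side of the claimed estimate does not control $\widehat{\varphi}(0)$.

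The core of the argument is a uniform bound $|G(n)|\le C|n|^{3/2}\|a\|_{H^{-1/2}}\|b\|_{H^{1/2}}$ for each $n\neq0$. To obtain it I would record the reverse triangle inequality $\big||m|-|n+m|\big|\le|n|$ and distribute the Sobolev weights by Cauchy--Schwarz: setting $A_m=|\widehat{a}(m)|(1+|m|)^{-1/2}$ and $B_m=|\widehat{b}(-n-m)|(1+|n+m|)^{1/2}$, one has $\sum_m A_m^2\simeq\|a\|_{H^{-1/2}}^2$ and $\sum_m B_m^2\simeq\|b\|_{H^{1/2}}^2$, while
\begin{align}
|G(n)|\le\sum_{m\in\mathbb{Z}}M_m\,A_m B_m,\qquad M_m:=\big||m|-|n+m|\big|\,(1+|m|)^{1/2}(1+|n+m|)^{-1/2}.
\end{align}
Thus it suffices to prove $M_m\le C|n|^{3/2}$ uniformly in $m$, after which Cauchy--Schwarz in $m$ closes the estimate for $G(n)$.

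The main obstacle, and the only genuinely delicate point, is this uniform bound on $M_m$ in the region $m\approx-n$, where the factor $(1+|n+m|)^{-1/2}$ is large. I would handle it with the elementary inequality $1+|m|\le2|n|(1+|n+m|)$, valid for every $m\in\mathbb{Z}$ and every $n$ with $|n|\ge1$ (it follows from $|m|\le|n|+|n+m|$ together with $1+|n|\le2|n|$); combined with $\big||m|-|n+m|\big|\le|n|$ this gives $M_m\le\sqrt2\,|n|^{3/2}$. This is exactly where the endpoint case costs an extra derivative on $\varphi$: the weight $|n|^{3/2}$ is unavoidable, which explains why here one must assume $(-\Delta)^{3/4}\varphi\in\mathbb{A}$ rather than merely $\varphi'\in\mathbb{A}$ as in Lemma \ref{lem: estimate for div}. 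Finally I would sum over $n$,
\begin{align}
\left|\int_{\mathbb{S}^1}D_\frac12(a,b)\varphi\right|\le2\pi\sum_{n\in\mathbb{Z}}|\widehat{\varphi}(n)|\,|G(n)|\le C\|a\|_{H^{-1/2}}\|b\|_{H^{1/2}}\sum_{n\in\mathbb{Z}}|n|^{3/2}|\widehat{\varphi}(n)|,
\end{align}
and recognize the last sum as $\|(-\Delta)^{3/4}\varphi\|_{\mathbb{A}}$, which completes the proof.
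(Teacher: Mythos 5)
Your proof is correct, but it follows a genuinely different route from the paper's. The paper first reduces, via self-adjointness of $(-\Delta)^\frac{1}{2}$ (exactly as in Lemma \ref{lem: estimate for div}), to the commutator operator bound $\lVert[(-\Delta)^\frac{1}{2},a]\varphi\rVert_{H^{-\frac{1}{2}}}\leq C\lVert(-\Delta)^\frac{3}{4}\varphi\rVert_{\mathbb{A}}\lVert a\rVert_{H^{-\frac{1}{2}}}$, which it proves by splitting the frequency plane into the two regions $\lvert n\rvert\geq 2\lvert k\rvert$ and $\lvert n\rvert<2\lvert k\rvert$, transferring weights differently in each region, and applying Young's convolution inequality ($\ell^1\ast\ell^2\to\ell^2$) twice; the pairing with $b$ is then a single duality step $H^{-\frac{1}{2}}$--$H^{\frac{1}{2}}$. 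You instead estimate the trilinear form directly: for each fixed $\varphi$-frequency $n$ you apply Cauchy--Schwarz in the convolution variable $m$, and the whole difficulty is compressed into the single pointwise inequality $M_m=\bigl\lvert\lvert m\rvert-\lvert n+m\rvert\bigr\rvert(1+\lvert m\rvert)^{1/2}(1+\lvert n+m\rvert)^{-1/2}\leq\sqrt{2}\,\lvert n\rvert^{3/2}$, which your elementary bound $1+\lvert m\rvert\leq 2\lvert n\rvert(1+\lvert n+m\rvert)$ delivers uniformly in $m$, with no case analysis and no Young's inequality; the final $\ell^1$ sum in $n$ then produces $\lVert(-\Delta)^\frac{3}{4}\varphi\rVert_{\mathbb{A}}$ by definition of the Wiener norm. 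I checked the individual steps (the Fourier coefficient formula for $D_\frac{1}{2}(a,b)$, the vanishing of the $n=0$ term, the equivalences $\sum_m A_m^2\simeq\lVert a\rVert_{H^{-1/2}}^2$, $\sum_m B_m^2\simeq\lVert b\rVert_{H^{1/2}}^2$, and the weight inequality) and they are all sound. What each approach buys: the paper's version isolates a commutator estimate that is an operator bound of independent interest and structurally mirrors Lemma \ref{lem: estimate for div}, whereas yours is shorter and more elementary, makes transparent that the loss of $3/2$ derivatives on $\varphi$ comes precisely from the region $m\approx-n$, and still yields the bilinear continuity in $(a,b)$ needed to extend $D_\frac{1}{2}$ distributionally to $a\in H^{-\frac{1}{2}}$, $b\in H^{\frac{1}{2}}$.
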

\begin{proof}
Proceeding as in the proof of Lemma \ref{lem: estimate for div} we see that it is enough to show the following estimate:
    \begin{align}\label{eq: commutator-estimate-proof}
        \lVert[(-\Delta)^\frac{1}{2},a]\varphi\rVert_{H^{-\frac{1}{2}}}\leq C\lVert(-\Delta)^\frac{3}{4}\varphi\rVert_{\mathbb{A}}\lVert a\rVert_{H^{-\frac{1}{2}}}.
    \end{align}
In term of Fourier coefficients, we would like to obtain a bound for
\begin{align}
    \lVert [(-\Delta)^\frac{1}{2},a]\varphi\rVert_{H^{-\frac{1}{2}}}^2=&\sum_{n\in \mathbb{Z}}\left\lvert\mathscr{F}\left([(-\Delta)^\frac{1}{2},a]\varphi\right)(n)\right\rvert^2(1+\lvert n\rvert^2)^{-\frac{1}{2}}\\
    \nonumber
    =&\sum_{n\in \mathbb{Z}}\left\lvert\sum_{k\in\mathbb{Z}}(\lvert n\rvert-\lvert n-k\rvert)(1+\lvert n\rvert^2)^{-\frac{1}{4}}\widehat{\varphi}(k)\widehat{a}(n-k)\right\rvert^2. 
\end{align}
Observe that if $\lvert n\rvert\geq 2\lvert k\rvert$, $\lvert n-k\rvert\leq \frac{3}{2}\lvert n\rvert$ and therefore
\begin{equation}
\frac{\lvert n\rvert-\lvert n-k\rvert}{(1+\lvert n\rvert^2)^\frac{1}{4}}\leq \left(\frac{3}{2}\right)^\frac{1}{2}\frac{\lvert k\rvert}{(1+\lvert n-k\rvert^2)^\frac{1}{4}}.
\end{equation}
Thus, by Young's inequality
\begin{equation}\label{eq_proof_lemma_approximation_12_step_1}
\begin{split}
&\sum_{n\in\mathbb{Z}}\left\lvert\sum_{\lvert k\rvert\leq \frac{\lvert n\rvert}{2}}\widehat{\varphi}(k)\widehat{a}(n-k)(\lvert n\rvert-\lvert n-k\rvert)(1+\lvert n\rvert^2)^{-\frac{1}{4}}\right\rvert^2\\
\leq& \frac{3}{2}\sum_{n\in \mathbb{Z}}\left(\sum_{k\in\mathbb{Z}}\lvert\widehat{\varphi}(k)\rvert\lvert k\rvert\lvert\widehat{a}(n-k)\rvert\lvert(1+\lvert n-k\rvert^2)^{-\frac{1}{4}}\rvert\right)^2\\
\leq&\frac{3}{2}\left(\sum_{k\in \mathbb{Z}}\lvert\hat{\varphi}(k)k\rvert\right)^2\sum_{k\in \mathbb{Z}}\lvert \hat{a}(k)\rvert^2(1+\lvert k\rvert^2)^{-\frac{1}{2}}=\frac{3}{2}\lVert\varphi'\rVert_{\mathbb{A}}^2\lVert a\rVert_{H^{-\frac{1}{2}}}^2.
\end{split}
\end{equation}
On the other hand, if $\lvert n\rvert<2 \lvert k\rvert$, $\lvert n-k\rvert\leq 3\lvert k\rvert$ and therefore
\begin{equation}
\frac{\lvert n\rvert -\lvert n-k\rvert}{\left(1+\lvert n\rvert^2\right)^\frac{1}{4}}\leq \frac{\lvert k\rvert}{(1+\lvert n-k\rvert^2)^\frac{1}{4}}\left(1+\lvert n-k\rvert^2\right)^\frac{1}{4}\leq \frac{2\lvert k\rvert^\frac{3}{2}}{\left(1+\lvert n-k\rvert^2\right)^\frac{1}{4}}.
\end{equation}
Thus, by Young's inequality
\begin{align}\label{eq_proof_lemma_approximation_12_step_2}
&\sum_{n\in\mathbb{Z}}\left\lvert\sum_{\lvert n\rvert<2\lvert k\rvert}\widehat{\varphi}(k)\widehat{a}(n-k)(\lvert n\rvert-\lvert n-k\rvert)(1+\lvert n\rvert^2)^{-\frac{1}{4}}\right\rvert^2\\
\nonumber
\leq & 4 \sum_{n\in\mathbb{Z}}\left(\sum_{\lvert n\rvert<2\lvert k\rvert}\lvert\widehat{\varphi}(k)\rvert\lvert k\rvert^\frac{3}{2}\lvert\widehat{a}(n-k)\rvert\lvert (1+\lvert n-k\rvert^2)^\frac{1}{4}\right)^2\\
\nonumber
\leq& 4\left(\sum_{k\in \mathbb{Z}}\lvert\hat{\varphi}(k)\rvert\lvert k\rvert^\frac{3}{2}\right)^2\sum_{k\in \mathbb{Z}}\lvert \hat{a}(k)\rvert^2(1+\lvert k\rvert^2)^{-\frac{1}{2}}\\
\nonumber
=&4\lVert(-\Delta)^\frac{3}{4}\varphi\rVert_{\mathbb{A}}^2\lVert a\rVert_{H^{-\frac{1}{2}}}^2=4 \lVert(-\Delta)^\frac{3}{4}\varphi\rVert_{\mathbb{A}}^2\lVert w\rVert_{H^{-\frac{1}{2}}}.
\end{align}
Combining (\ref{eq_proof_lemma_approximation_12_step_1}) and (\ref{eq_proof_lemma_approximation_12_step_2}) we obtain estimate \eqref{eq: commutator-estimate-proof}.
\end{proof}

Next we discuss the link of the operator $D_s$ with the fractional divergence operator introduced in \cite{Mazowiecka-Schikorra}
For $s\in \left(0,\frac{1}{2}\right)$ denote $K^s$ the kernel of the $s$-fractional Laplacian, so that for any $\varphi\in C^\infty(\mathbb{S}^1)$
\begin{align}
        (-\Delta)^s \varphi(x)\int_{\mathbb{S}^1}(\varphi(x)-\varphi(y))K^s(x-y)dy.
    \end{align}
For a description of $K^s$, see \cite{Roncal-Stinga}. Following \cite{Mazowiecka-Schikorra}, for any $F:\mathbb{S}^1\times\mathbb{S}^1\to\mathbb{R}$ we define the \textbf{$s$-fractional divergence of $F$} to be the distribution
\begin{align}
        \operatorname{div}_sF[\varphi]=\int_{\mathbb{S}^1}\int_{\mathbb{S}^1}F(x,y)(\varphi(x)-\varphi(y))K^{\frac{s}{2}}(x-y)dxdy.
    \end{align}
whenever the integral is well defined.
\begin{lem}\label{lem: link-fractional-divergence}
    Let $s\in (0,\frac{1}{2})$. Let $a, b, \varphi\in C^\infty(\mathbb{S}^1)$. Then
    \begin{align}
        \int_{\mathbb{S}^1}D_s(a,b)\varphi=2\operatorname{div}_{2s}(a(x)b(y))[\varphi].
    \end{align}
\end{lem}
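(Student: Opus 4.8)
The plan is to substitute the singular-integral representation of the fractional Laplacian into the definition of $D_s(a,b)$, collapse the resulting bracket, and then recognise the symmetrised double integral as the fractional divergence. First I would write, for $a,b,\varphi\in C^\infty(\mathbb{S}^1)$,
\begin{align*}
\int_{\mathbb{S}^1} D_s(a,b)\,\varphi
=\int_{\mathbb{S}^1}\!\int_{\mathbb{S}^1}
\Big[\big(a(x)-a(y)\big)b(x)-\big(b(x)-b(y)\big)a(x)\Big]K^s(x-y)\,\varphi(x)\,dy\,dx,
\end{align*}
using the identity $(-\Delta)^s a(x)=\int_{\mathbb{S}^1}(a(x)-a(y))K^s(x-y)\,dy$ together with its analogue for $b$, and Fubini's theorem to merge the two iterated integrals.

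The algebraic heart of the computation is that the bracket simplifies: the two $a(x)b(x)$ terms cancel and one is left with
\begin{align*}
\big(a(x)-a(y)\big)b(x)-\big(b(x)-b(y)\big)a(x)=a(x)b(y)-a(y)b(x).
\end{align*}
Writing $F(x,y)=a(x)b(y)-a(y)b(x)$, this kernel is antisymmetric, $F(y,x)=-F(x,y)$, while $K^s$ is even. I would then symmetrise the integral: interchanging the roles of $x$ and $y$ and using $F(y,x)K^s(y-x)=-F(x,y)K^s(x-y)$ replaces the factor $\varphi(x)$ by the symmetric difference $\varphi(x)-\varphi(y)$. Re-expanding $F=a(x)b(y)-a(y)b(x)$ and relabelling once more, both summands reproduce the integral $\int\!\int a(x)b(y)(\varphi(x)-\varphi(y))K^s(x-y)$ defining $\operatorname{div}_{2s}(a(x)b(y))[\varphi]$ (the kernel of $\operatorname{div}_{2s}$ being $K^{(2s)/2}=K^s$), and the numerical constant in the statement is exactly the one generated by this symmetrisation.

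The hard part is not the algebra but the integrability, so I would carry out the manipulations above only after securing absolute convergence. For $s\in(0,\tfrac12)$ one has $K^s(x-y)\sim|x-y|^{-(1+2s)}$ on the diagonal, so neither $a(x)b(y)K^s$ nor $a(y)b(x)K^s$ is separately integrable, and the cancellations are essential. Because $F(x,y)$ (and likewise $\varphi(x)-\varphi(y)$) vanishes to first order on the diagonal, a first-order Taylor estimate exploiting the smoothness of $a,b,\varphi$ gives integrands of size $O(|x-y|^{-2s})$, which is integrable precisely because $2s<1$. This uniform bound is what legitimises the application of Fubini's theorem and each change of variables used above, after which the identity follows.
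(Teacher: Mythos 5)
Your route is the same as the paper's: insert the kernel representation of $(-\Delta)^s$ into $D_s(a,b)$, cancel the $a(x)b(x)$ terms to obtain the antisymmetric kernel $F(x,y)=a(x)b(y)-a(y)b(x)$, symmetrise in $(x,y)$ to trade $\varphi(x)$ for $\varphi(x)-\varphi(y)$, and identify the result with the fractional divergence. Your convergence discussion (the $O(\lvert x-y\rvert^{-2s})$ bound coming from the first-order vanishing of the differences on the diagonal, integrable since $2s<1$) is correct, and is a point the paper leaves entirely implicit.

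The one step you assert rather than compute is exactly the step that needs care: the constant. Averaging $\int\!\!\int F(x,y)\varphi(x)K^s(x-y)$ with its relabelled copy produces
\begin{equation*}
\int_{\mathbb{S}^1}\!\int_{\mathbb{S}^1} F(x,y)\,\varphi(x)\,K^s(x-y)\,dx\,dy
=\frac{1}{2}\int_{\mathbb{S}^1}\!\int_{\mathbb{S}^1} F(x,y)\bigl(\varphi(x)-\varphi(y)\bigr)K^s(x-y)\,dx\,dy,
\end{equation*}
i.e.\ the symmetrisation carries a prefactor $\frac{1}{2}$; re-expanding $F$ and relabelling once more gives a factor $2$, so the net outcome of your own outline is
\begin{equation*}
\int_{\mathbb{S}^1}D_s(a,b)\,\varphi
=\int_{\mathbb{S}^1}\!\int_{\mathbb{S}^1} a(x)b(y)\bigl(\varphi(x)-\varphi(y)\bigr)K^s(x-y)\,dx\,dy
=\operatorname{div}_{2s}\bigl(a(x)b(y)\bigr)[\varphi],
\end{equation*}
with constant $1$, not $2$. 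Hence your claim that the factor $2$ in the statement is ``exactly the one generated by this symmetrisation'' does not survive the bookkeeping. You are in good company: the paper's own proof makes the identical slip, silently dropping the prefactor $\frac{1}{2}$ when passing from the averaged two-integral line to $\int\!\!\int(a(x)b(y)-a(y)b(x))(\varphi(x)-\varphi(y))K^s$, which is precisely where its factor $2$ comes from. A careful write-up should either conclude the identity with constant $1$, or exhibit a normalisation convention for $K^s$ or $\operatorname{div}_s$ (absent from the definitions as stated) that restores the $2$; in any case the constant must be computed, not asserted.
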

\begin{proof}
We compute
    \begin{align}
        \int_{\mathbb{S}^1}D_s(a,b)\varphi=&\int_{\mathbb{S}^1}\int_{\mathbb{S}^1}((a(x)-a(y))b(x)-a(x)(b(x)-b(y)))\varphi(x)K^s(x-y)dxdy\\
        \nonumber
        =&\frac{1}{2}\int_{\mathbb{S}^1}\int_{\mathbb{S}^1}((a(x)-a(y))b(x)-a(x)(b(x)-b(y)))\varphi(x)K^(x-y)dxdy\\
        \nonumber
        &+\frac{1}{2}\int_{\mathbb{S}^1}\int_{\mathbb{S}^1}((a(y)-a(x))b(y)-a(y)(b(y)-b(x)))\varphi(y)K^s(x-y)dxdy\\
        \nonumber
        =&\int_{\mathbb{S}^1}\int_{\mathbb{S}^1}(a(x)b(y)-a(y)b(x))(\varphi(x)-\varphi(y))K^s(x-y)dxdy\\
        \nonumber
        =&2\int_{\mathbb{S}^1}\int_{\mathbb{S}^1}a(x)b(y)(\varphi(x)-\varphi(y))K^s(x-y)dxdy\\
        \nonumber
        =&2\operatorname{div}_{2s}(a(x)b(y))[\varphi],
    \end{align}
    where in the second step we interchanged the variables $x$ and $y$.
\end{proof}

\printbibliography

\end{document}